\colorlet{myred}{black}
\colorlet{extensions}{blue!45!red!100!orange!90!}
\newcommand{\norm}[1]{\left\lVert#1\right\rVert}
\newtheorem*{remark}{Remark}
\theoremstyle{definition}
\newtheorem{definition}{Definition}[section]
\newtheorem{thm}{Theorem}[section]
\newtheorem{proposition}{Proposition}[section]
\newtheorem{assumption}{Assumption}
\newtheorem{lemma}{Lemma}[section]
\title{\LARGE \bf
Clustering and synchronization analysis of Networks of Bistable Systems
}
\author{Gianluca Villani and Luca Scardovi
  \thanks{This research was supported by the National Sciences and Engineering Research Council of Canada.}
  \thanks{The authors are with the Department of Electrical and Computer Engineering, University of Toronto, 
    10 King's College Road, Toronto, ON, M5S 3G4, Canada.  
    {\tt\small gianluca.villani@mail.utoronto.ca,  luca.scardovi@utoronto.ca}}%
}
\begin{document}
\maketitle

\begin{abstract}
  This paper studies the dynamics of a network of diffusively-coupled bistable systems. Under mild conditions and without requiring smoothness of the vector field, we analyze the network dynamics and show that the solutions converge globally to the set of equilibria for generic monotone (but not necessarily strictly monotone) regulatory functions. Sufficient conditions for global state synchronization are provided. Finally, by adopting a piecewise linear approximation of the vector field, we determine the existence, location and stability of the equilibria as function of the coupling gain. The theoretical results are illustrated with numerical simulations.
\end{abstract}

\section{Introduction}

Nonlinear dynamical systems can exhibit the coexistence of multiple stable equilibria or, more generally, attractors. This phenomenon, known as multistability \cite{attneave1971multistability}, is ubiquitous across various natural sciences, including optics, mechanics, chemistry, and biology \cite{feudel2008complex}, \cite{pisarchik2014control}. Multistability plays a significant role in fundamental biological processes. In the context of Gene Regulatory Networks (GRNs), genetic circuits that control cell fate decisions are traditionally modeled as multistable dynamical systems \cite{waddington2014strategy}, with different stable steady states representing different cell phenotypes \cite{shah2020reprogramming}. It has been conjectured \cite{thomas1981relation} and proved \cite{snoussi1998necessary} that the presence of positive feedback loops is a necessary condition for the existence of multiple equilibria. The synthetic toggle switch is one of the first examples where these theoretical results have been experimentally verified in a biological system \cite{gardner2000construction}.


Recently, networks of bistable switches coupled by quorum sensing \cite{nikolaev2016quorum} and diffusive coupling \cite{ali2019multi, augier2022qualitative, chaves2019coupling, villani2021global} have been investigated, with their properties of auto-correction, synchronization and convergence to equilibria characterized. In \cite{augier2022qualitative}, \cite{chaves2019coupling} the property of convergence to equilibria has been conjectured without a formal analysis. In \cite{nikolaev2016quorum}, the property of (almost) global convergence to equilibria was proved using the theory of strongly monotone systems \cite{smith2008monotone}. However, this property is lost when the graph associated with the signed Jacobian of the vector field is not irreducible everywhere. This occurs, for example, when the interactions between different chemical species are modeled by piece-wise affine functions with intervals where the functions are constant. In \cite{villani2021global} this property has been  established for a particular class of networks with strictly increasing nonlinearities. Alternative approaches to prove convergence to equilibria approaches include  the use of Lyapunov functions \cite{chua1988cellular, lin1999complete, smillie1984competitive, pasquini2019convergence, forti2001new}, contraction theory \cite{muldowney1990compound, kcontraction}, and leveraging passivity-like properties \cite{angeli2006systems, angeli2007multistability, angeli2009convergence, ghallab2018extending, ghallab2022negative}.

In this paper, we investigate a fairly general class of networks of bistable systems coupled by diffusion, focusing on convergence to equilibria and synchronization properties. 
The model encompasses both positive autoregulation loops and mutual repression circuits (toggle switches). Our first result establishes that all trajectories converge to the set of equilibria, regardless of the system’s parameters, coupling strength, and network topology, and for generic monotone (not necessarily strictly monotone) regulatory functions. This result builds upon the theory of counterclockwise Input/Output systems \cite{angeli2006systems,  angeli2007multistability, angeli2009convergence} and generalizes results requiring strictly monotone regulation functions \cite{nikolaev2016quorum, villani2021global}. 

Our second result provides sufficient conditions for asymptotic state synchronization. This result does not require a specific network topology but establishes a condition that involves the degree of connectivity of the network topology and the isolated system's dynamics. 

Our last result addresses a biologically relevant networked model by adopting a piece-wise linear approximation of the regulatory functions. In particular, we characterize the local stability of equilibria and, for an all-to-all homogeneous network, we analytically characterize the emergence of stable clustering configurations when the strength of the diffusive coupling is varied. 
To the best of our knowledge, such an exhaustive study of convergence, synchronization, and clustering properties has not been previously reported in the literature.

The paper is organized as follows. In Section II, we introduce the class of compartmental models under investigation. Section III establishes the property of convergence to equilibria. In Section IV, we provide sufficient conditions for global synchronization. In Section V, we characterize the local stability of equilibria and clustering properties for an all-to-all network of piecewise affine bistable systems. Finally, in Section VI, we illustrate the results with numerical simulations and suggest directions for future research. The appendix reports the main technical preliminaries used throughout the paper.

\section{A class of Networks of Bistable Systems}
    \subsection{A class of Bistable Motifs}
    In this section, we present a class of interconnected compartmental models that exemplifies a network of bistable systems. 
    The dynamics of each compartment is described by the ordinary differential equation
    \begin{equation}
      \begin{aligned}
          & \dot{x}_{1}=-\gamma_1 x_{1}+V_1 g_2(x_{2})  \\ 
          & \dot{x}_{2}=-\gamma_2 x_{2}+V_2 g_1(x_{1}), 
      \end{aligned}
      \label{eq:bistable_compartment}
    \end{equation}
    where the positive scalar values $x_1$ and $x_2$ represent the concentration of two chemical species and the positive constants $V_1$ and $V_2$ determine their synthesis rates. The regulation functions $g_i: \mathbb{R}_{\geq 0} \to \mathbb{R}_{\geq 0}$ are assumed to be monotone functions, positive-valued and either both monotonically increasing or monotonically decreasing\footnote{In this paper we will use the terminology monotonically increasing (decreasing) to indicate that a function is non-decreasing (non-increasing).}. The class of models in \eqref{eq:bistable_compartment} provides a unified framework for studying the dynamics of two important bistable motifs in gene regulatory networks.  
    When both regulation functions are monotonically non-decreasing, \eqref{eq:bistable_compartment} includes Positive Auto Regulation (PAR) loops \cite{alon2006introduction}, where two transcription factors activate each other. Autoinduction circuits, such as those involved in quorum sensing systems \cite{rai2012prediction, fujimoto2013design, schuster2023parameters}, are typically modeled as PAR loops. Conversely, when the regulation functions are both monotonically decreasing (inhibitory interactions), \eqref{eq:bistable_compartment} includes the well-known toggle switch where two proteins mutually inhibit each other's synthesis \cite{gardner2000construction}. Under appropriate assumptions on the regulation functions and the system's parameters, both types of systems exhibit bistability, with two stable steady states. Figure \ref{fig:phase_portrait} shows an example of a phase portrait of the vector field in \eqref{eq:bistable_compartment} where both the regulation functions are piecewise affine functions \eqref{eq:g_function}. 
    
    One of the most common modelling choices for the regulation functions for an activator is the Hill function
    \begin{equation}
      g_s(x) = \frac{(x/\theta)^n}{1 + (x/\theta)^n},
      \label{eq:g_function_smooth} 
    \end{equation}
    where $\theta>0$ defines the concentration of $x$ to significantly activate expression and $n$ is the \textcolor{black}{cooperative degree} corresponding to different steepness values of the function $g_s$. The regulation function for a repressor is instead modelled as
    \begin{equation}
    g^-_s(x) = \frac{1}{1 + (x/\theta)^n} = 1- g_s(x),
    \label{eq:g_function_smooth_repressor} \quad 
    \end{equation}
    where $\theta>0$ represents the concentration of $x$ to significantly repress expression. 
    The sigmoidal Hill regulatory functions can be approximated with 
     piecewise affine continuous functions \cite{alon2006introduction}, \cite{pwa_gene_modelling_comparison}
    \begin{equation}
      g(x)= \begin{cases}
        0,                 & \ x< \theta                   \\\
    (x-\theta)/\delta, & \theta \leq x \leq \theta+\delta \\
        1                  & \ x > \theta +\delta
      \end{cases}
      \label{eq:g_function}
    \end{equation}
     with corresponding inhibitory regulatory functions $g^-(x) = 1-g(x)$. 
    In addition to the activator and repressor functions described above, another important case to consider is when one of the functions is a linear increasing function, i.e., $g(x) = x$. This case corresponds to a situation where one of the chemical species is a synthase, and the corresponding synthesized molecule is synthesized at a rate proportional to its concentration \cite{fujimoto2013design}, \cite{schuster2023parameters}. In this paper, unless specified, we will not assume specific regulatory functions but will rely on the following assumption.    
    

    

\begin{assumption}\ref{ass:monotonicity_boundedness_lipschitz}
The functions $g_i,\ i=1,2$, in \eqref{eq:bistable_compartment} are positive-valued, Lipschitz continuous, and either both monotonically increasing or both monotonically decreasing. Furthermore, at least one of the functions is bounded.
\label{ass:monotonicity_boundedness_lipschitz}
\end{assumption}
    This assumption guarantees the boundedness of all forward trajectories of \eqref{eq:bistable_compartment}, as shown in the following proposition.
    
    \begin{proposition}
      If Assumption \ref{ass:monotonicity_boundedness_lipschitz} holds, there exist $\bar{x}_1$ and $\bar{x}_2$ such that the set
      \begin{equation}
        \mathcal{B}:=[0, \bar{x}_1]\times [0, \bar{x}_2]
        \label{eq:B_set}
      \end{equation}
      is forward invariant with respect to the dynamics \eqref{eq:bistable_compartment}.
      \label{prop:box_invariance}
    \end{proposition}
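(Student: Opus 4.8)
The plan is to show that the vector field of \eqref{eq:bistable_compartment} points (weakly) inward along every face of the candidate box $\mathcal{B}$, and then to invoke Nagumo's subtangentiality theorem, which applies because Assumption~\ref{ass:monotonicity_boundedness_lipschitz} makes the right-hand side (globally) Lipschitz and hence guarantees local existence and uniqueness. Concretely, for a box $[0,\bar x_1]\times[0,\bar x_2]$ forward invariance reduces to checking, face by face, that $\dot x_i\ge 0$ on each face $x_i=0$ and $\dot x_i\le 0$ on each face $x_i=\bar x_i$, for all admissible values of the remaining coordinate. So the whole proof amounts to choosing $\bar x_1,\bar x_2$ for which these four sign conditions hold.

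I would first dispose of the two faces lying on the axes. Because the $g_i$ are positive-valued, on $x_1=0$ we have $\dot x_1=V_1 g_2(x_2)\ge 0$ and on $x_2=0$ we have $\dot x_2=V_2 g_1(x_1)\ge 0$, so the nonnegative orthant is never exited; this handles the lower faces of $\mathcal B$ independently of the eventual choice of $\bar x_1,\bar x_2$.

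The substance is the construction of the upper bounds, and the two monotonicity regimes are handled differently. Assume without loss of generality that $g_1$ is the bounded function and set $M_1:=\sup_{s\ge 0} g_1(s)$. In the decreasing case both regulation functions are automatically bounded (a positive non-increasing function on $[0,\infty)$ is bounded by its value at $0$), so $M_2:=g_2(0)$ is finite, and the choices $\bar x_2:=V_2 M_1/\gamma_2$ and $\bar x_1:=V_1 M_2/\gamma_1$ immediately give $\dot x_2\le -\gamma_2\bar x_2+V_2 M_1=0$ on $x_2=\bar x_2$ and $\dot x_1\le -\gamma_1\bar x_1+V_1 M_2=0$ on $x_1=\bar x_1$. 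In the increasing case one coordinate must be bounded first and then propagated: using only $g_1\le M_1$ I set $\bar x_2:=V_2 M_1/\gamma_2$, which makes $\dot x_2\le 0$ on $x_2=\bar x_2$; then, invoking monotonicity, for every $x_2\in[0,\bar x_2]$ we have $g_2(x_2)\le g_2(\bar x_2)$, so the choice $\bar x_1:=V_1 g_2(\bar x_2)/\gamma_1$ yields $\dot x_1\le -\gamma_1\bar x_1+V_1 g_2(\bar x_2)=0$ on $x_1=\bar x_1$.

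The only delicate point I anticipate is precisely this sequential bounding in the increasing case, which is forced by the fact that an unbounded regulation function (for instance the linear synthase $g(x)=x$) precludes any uniform a priori bound on the coordinate it drives; monotonicity is exactly what converts the finite bound on $x_2$ into a finite bound on $g_2$, and hence on $x_1$. One must verify that there is no circularity here, but the bound $\bar x_2$ depends only on the uniform constant $M_1$ and not on $x_1$, so the two estimates close in order. Having established the inward-pointing condition on all four faces, Nagumo's theorem (equivalently, a scalar comparison argument applied coordinatewise) delivers forward invariance of $\mathcal B$; as a by-product the trajectories are bounded and the solutions exist for all $t\ge 0$.
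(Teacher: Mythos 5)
Your proof is correct and takes essentially the same route as the paper's: the Nagumo subtangentiality condition checked face by face, with the lower faces dispatched by positivity of the $g_i$ and the upper bounds $\bar x_2 = V_2 M_1/\gamma_2$, $\bar x_1 = V_1 g_2(\bar x_2)/\gamma_1$ constructed sequentially from the bounded function. If anything, your explicit case split is slightly more careful than the paper's, whose stated bound $\bar x_1 \geq g_2(M V_2/\gamma_2)V_1/\gamma_1$ tacitly assumes $g_2$ is increasing, whereas in the decreasing case the correct choice is $\bar x_1 \geq g_2(0)V_1/\gamma_1$, which your argument supplies.
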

    
    \begin{proof}
      To prove that the set $\mathcal{B}$ is invariant, it is sufficient to prove that on the boundary $\partial \mathcal{B}$, the vector field in \eqref{eq:bistable_compartment} satisfies the Nagumo condition \cite{blanchini2008set}. This requires the following conditions to be met on $\partial \mathcal{B}$:
      \begin{itemize}
        \item [i)]  $f_1(0, x_2)\geq 0$, $\forall x_2 \in [0, \bar x_2]$;
        \item  [ii)] $f_2(x_1, 0)\geq 0$, $\forall x_1 \in [0, \bar x_1]$;
        \item  [iii)] $ f_1(\bar x_1, x_2)\leq  0$, $\forall x_2 \in  [0, \bar x_2]$;
        \item  [iv)] $f_2(x_1, \bar x_2)\leq 0$, $\forall x_1 \in [0, \bar x_1]$.
      \end{itemize}
      Since $g_1$ and $g_2$ are nonnegative functions, the first two conditions are trivially satisfied. The last two conditions are equivalent to
      \begin{equation}
        \begin{split}
          \gamma_{1} \bar x_1 \geq V_1g_2(x_2), \forall x_2 \in [0, \bar x_2],\\
          \gamma_{2} \bar x_2 \geq V_2g_1(x_1), \forall x_1 \in [0, \bar x_1].\\
        \end{split}
      \end{equation}
      Without loss of generality, assume that $g_1$ is upper bounded with bound $M>0$. From the second equation, we obtain $\bar x_2 \geq V_2 M/ \gamma_2$. Using this bound in the first inequality we obtain the second bound $\bar x_1 \geq g_2(V_2/\gamma_2)V_1/\gamma_1$. We conclude that $\mathcal{B}$ is forward invariant for any $\bar x_1 \geq  g_2(MV_2/\gamma_2)V_1/\gamma_1$ and $\bar x_2 \geq M V_2/\gamma_2$.\end{proof}

    \begin{figure}
      \centering
      \includegraphics[width=0.99\linewidth]{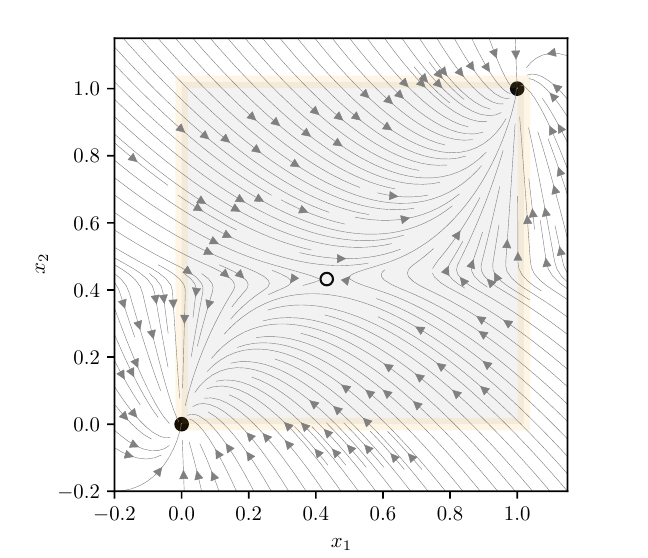}
      \caption{Phase portrait for \eqref{eq:bistable_compartment} with regulatory functions $g$ in \eqref{eq:g_function}. The grey box with orange edges highlights the forward invariant (and attractive) set $\mathcal{B}$. The full black circles represent the two stable equilibria and the white circle represents the unstable saddle equilibrium.}
      \label{fig:phase_portrait}
    \end{figure}
    \subsection{Interconnected bistable motifs}
    Simple network motifs as the one described in \eqref{eq:bistable_compartment} are often not isolated but interact through a common shared medium (\textit{quorum sensing}, \cite{miller2001quorum}, \cite{nikolaev2016quorum}, \cite{boo2021quorum}) or diffusive coupling \cite{silver1996diffusible}, \cite{ali2019multi}. We consider here a networked system with $N$ identical compartments, each defined by \eqref{eq:bistable_compartment}, and coupled via diffusive coupling
    \begin{equation}
      \begin{aligned}
          & \dot{x}_{1, i}=-\gamma_1 x_{1, i}+V_1 g_2(x_{2,i})+\sum_{j} a_{i, j}\left(x_{1, j}-x_{1, i}\right), \\
          & \dot{x}_{2, i}=-\gamma_2 x_{2, i}+V_2g_1(x_{1,i}),
      \end{aligned}
      \label{eq:bistable_network_first}
    \end{equation}
    $i=1,\ldots,N$. The non-negative coefficients $a_{i,j}$ are coupling coefficients between the first species in different compartments. In the rest of the paper we will assume that the communication between compartments relies solely on the diffusion of the first species over an undirected graph, i.e. $a_{i,j} = a_{j,i}$. This assumption is realistic since it often applies that only one of the regulatory molecules can diffuse through the cell's membrane \cite{boo2021quorum, fujimoto2013design, schuster2023parameters }. By defining $X_1 = [x_{1,1}, \dots, x_{1,N}]^T$, $X_2 = [x_{2,1}, \dots, x_{2,N}]^T$ and the vectors of activations $G_\ell(X_\ell)= [g_\ell(x_{\ell,1}), \dots, g_\ell(x_{\ell,N})],\ \ell=1,2$, we can rewrite \eqref{eq:bistable_network_first} in the compact form
    \begin{equation}
      \begin{aligned}
          & \dot{X}_{1}=-(\Gamma_{1} + L) X_{1}+V_{1} G_2(X_{2}), \\
          & \dot{X}_{2}=-\Gamma_{2} X_{2}+V_{2} G_1(X_{1}),
      \end{aligned}
      \label{eq:bistable_network_vector}
    \end{equation}
    where $L=[l_{ij}]$,  is the Laplacian matrix defined as
    \begin{equation*}
      l_{ij} = \left\{\begin{array}{ll}
        -a_{ij}^\ell,                       & i \neq j \\
        \sum\limits_{i \neq j} a_{ij}^\ell, & i=j
      \end{array}, \right. 
    \end{equation*}
 $\Gamma_i = \gamma_i I_N$ with $I_N$ being the $N\times N$ identity matrix. The parameters $a_{ij}$ can be associated to a graph where species in different compartments are the nodes and $a_{ij}>0$ corresponds to a weighted edge in the graph. We will denote the resulting graph as $\mathcal{G}$. The graph $\mathcal{G}$ is connected if given any two nodes there exists a path that connects them and $\mathcal{G}$ is symmetric (or undirected) if $a_{ij} = a_{ji}$ for every $i,j$.  
    In the following we will use the concept of algebraic connectivity extended to directed graphs \cite{wu2005algebraic}.
      \begin{definition}{\cite{wu2005algebraic}}
        For a directed graph with Laplacian matrix $L$, the algebraic connectivity is the real number defined as
        $$
          \lambda:=\min _{z \in \mathcal{P}} z^T L z
        $$
        where $\mathcal{P}=\left\{z \in \mathbb{R}^n: z \perp \mathds{1}_n,\|z\|=1\right\}$ and where $\mathds{1}_n \triangleq$ $[1,1, \ldots, 1]^T \in \mathbb{R}^n$
      \end{definition}

    \noindent From the definition of $\mathcal{B}$ in \eqref{eq:B_set} we can define a forward invariant hyperbox for the networked system in \eqref{eq:bistable_network_first}
    \begin{equation}
      \mathcal{B}^N=\prod_{i=1}^{N} \mathcal{B} .
    \end{equation}
    The proof of the invariance of $\mathcal{B}^N$ follows the same lines of the proof for Proposition \ref{prop:box_invariance} and is therefore omitted.
    For the rest of the paper, we will consider $\mathcal{B}^N$ as the state space.

\section{Convergence Analysis}
\label{sec:multistability_clustering}

In the next result, we show that all trajectories of \eqref{eq:bistable_network_first} converge to the set of equilibria, independently of the system's parameters, coupling strength, and network topology. 

\begin{thm}
  Consider system \eqref{eq:bistable_network_first} under Assumption \ref{ass:monotonicity_boundedness_lipschitz} and assume that $a_{i,j}=a_{j,i}$, $i,j=1,\ldots,N$. Then, for every initial condition $\xi \in \mathcal{B}^N$ the $\omega$-limit set $\omega(\xi)$, contains only equilibria of \eqref{eq:bistable_network_first}. If the equilibria of \eqref{eq:bistable_network_first} are isolated, for all initial conditions, $\lim _{t \rightarrow+\infty} x(t)=\bar{x}$, where $\bar{x}$ is an equilibrium of \eqref{eq:bistable_network_first}.
  \label{thm:multistability_convergence}
\end{thm}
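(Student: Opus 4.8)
The plan is to exploit the counterclockwise (CCW) input/output framework of \cite{angeli2006systems, angeli2007multistability, angeli2009convergence} rather than the theory of strongly monotone systems, so that strict monotonicity of the $g_i$ is never needed. First I would record that, by Proposition \ref{prop:box_invariance} and the analogous invariance of $\mathcal{B}^N$, every forward trajectory starting in $\mathcal{B}^N$ is bounded; hence $\omega(\xi)$ is nonempty, compact, connected and invariant, and it suffices to show that every point of $\omega(\xi)$ is an equilibrium. I would then recast \eqref{eq:bistable_network_vector} as a Lur'e feedback interconnection of a linear block $\Sigma_L$, carrying the decay/diffusion dynamics with state $(X_1,X_2)$ and inputs entering through $V_1 G_2$ and $V_2 G_1$, with the static block-diagonal map $\sigma=(G_1,G_2)$ (the feedback sign being dictated by whether the $g_i$ are increasing or decreasing). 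The block $\Sigma_L$ is Hurwitz, since $\Gamma_2=\gamma_2 I\succ 0$ and $\Gamma_1+L\succ 0$ because $\gamma_1>0$ and $L\succeq 0$, and, crucially, symmetric, since the undirected hypothesis $a_{i,j}=a_{j,i}$ yields $L=L^{\top}$.

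Next I would verify the two input/output signatures required by the interconnection theorem. The static map $\sigma$ is counterclockwise: along any trajectory the loop integral $\int_0^T \sigma(\cdot)^{\top}\,d(\cdot)$ telescopes to the increment of the separable potential $\sum_i\int_0^{(\cdot)_i} g_\ell$, which stays bounded because the trajectory remains in the compact set $\mathcal{B}^N$. This estimate uses only monotonicity of each $g_\ell$ and neither differentiability nor strict monotonicity, and it is precisely where the present argument improves on \cite{nikolaev2016quorum, villani2021global} and accommodates the flat segments of the piecewise affine functions \eqref{eq:g_function}. Dually, I would show that $\Sigma_L$ carries the complementary clockwise signature by checking the frequency-domain sign condition on $G(j\omega)/(j\omega)$ for its transfer function, built from the resolvents $(sI+\Gamma_1+L)^{-1}$ and $(sI+\Gamma_2)^{-1}$ (suitably permuted to account for the crossed feedback); here the symmetry and positive-definiteness of $\Gamma_2$ and $\Gamma_1+L$ make the relevant Hermitian forms real and sign-definite, so undirectedness of $\mathcal{G}$ is used a second, essential time.

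With both signatures established, the interconnection falls within the scope of the counterclockwise convergence theorem recalled in the appendix: the Hurwitz property of $\Sigma_L$ supplies the required detectability hypothesis, and the attendant LaSalle-type invariance principle for CCW systems forces the time-derivative of the state to vanish on $\omega(\xi)$, so that $\omega(\xi)$ consists only of equilibria of \eqref{eq:bistable_network_first}. Finally, when the equilibria are isolated they form a discrete set, which is finite by compactness of $\mathcal{B}^N$; since $\omega(\xi)$ is connected, it must reduce to a single point $\bar{x}$, and therefore $\lim_{t\to+\infty}x(t)=\bar{x}$.

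The main obstacle I anticipate is not the telescoping CCW estimate but the precise verification of the clockwise frequency-domain condition for the \emph{multivariable} block $\Sigma_L$, together with the detectability and zero-state hypotheses of the interconnection theorem, so that mere boundedness of the loop integral genuinely upgrades to convergence to the equilibrium set. Keeping every estimate valid under non-strict monotonicity, where the signed Jacobian graph fails to be irreducible, is the delicate point that the counterclockwise formulation is specifically designed to handle.
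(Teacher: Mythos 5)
Your high-level strategy (work in the CCW framework precisely to avoid strict monotonicity, then use connectedness of $\omega(\xi)$ when equilibria are isolated) is the same as the paper's, but your specific Lur'e decomposition breaks down exactly at the point you flag as ``the main obstacle,'' and the failure is not a technical verification left to do --- it is provable. With the block-diagonal static map $\sigma=(G_1,G_2)$, the channel crossing must sit inside the linear block, so $\Sigma_L$ has the anti-diagonal transfer matrix
\begin{equation*}
G(s)=\begin{pmatrix} 0 & V_1\left(sI+\Gamma_1+L\right)^{-1}\\[2pt] V_2\left(sI+\Gamma_2\right)^{-1} & 0\end{pmatrix}.
\end{equation*}
The frequency-domain CCW (resp.\ CW) condition for a MIMO LTI block requires the Hermitian form $j\left[G(j\omega)-G(j\omega)^*\right]$ to be positive (resp.\ negative) semidefinite for $\omega>0$. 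Since the diagonal blocks of $G$ are zero, the diagonal blocks of this Hermitian form are identically zero, and a sign-semidefinite Hermitian matrix with zero diagonal must vanish entirely; this would force $V_1\left(j\omega I+\Gamma_1+L\right)^{-1}=V_2\left(\Gamma_2-j\omega I\right)^{-1}$ for all $\omega$, which is impossible (the two resolvents have imaginary parts of opposite sign). So $\Sigma_L$ is neither CCW nor CW, no matter how the symmetry and positive definiteness of $\Gamma_1+L$ and $\Gamma_2$ are exploited, and no diagonal density-function reweighting can repair a structural obstruction in the interconnection pattern. Each diagonal channel $V_1(sI+\Gamma_1+L)^{-1}$ and $V_2(sI+\Gamma_2)^{-1}$ is individually CCW, but their anti-diagonal coupling has no rotational signature; this is precisely why the ``all nonlinearities in one static block'' decomposition cannot be pushed through the interconnection theorems of \cite{angeli2007multistability,angeli2009convergence}.

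The paper cuts the loop differently so that the crossing never appears in a linear block: each two-dimensional compartment --- which internally contains the path $x_{1,i}\to g_1\to x_{2,i}\to V_1g_2\to x_{1,i}$ --- is treated as a single nonlinear SISO block with input $u_i$ and output $y_i=x_{1,i}$, and is shown to be strictly CCW (Proposition \ref{prop:positive_loop}, built on Lemma \ref{prop:scalar_monotone_preliminaries}, which is the adaptation of Lemma III.1 of \cite{angeli2007multistability} to non-strictly monotone $g$); the network is then the positive feedback of this bank of strictly CCW blocks with the static map $-L$, which is CCW exactly because $L=L^{\top}$. Two further points: first, even with a workable decomposition, Theorem 1 of \cite{angeli2009convergence} only localizes $\omega(\xi)$ in the largest invariant set where $\dot y\equiv 0$, i.e.\ where the \emph{outputs} $x_{1,i}$ are constant, not where the full state derivative vanishes; the paper needs an extra argument (on that invariant set the compartments decouple, and a non-equilibrium point would have $x_{2,i}(t)$ unbounded backward in time, contradicting compactness of $\omega(\xi)$), so your phrase ``forces the time-derivative of the state to vanish'' conceals a genuine additional step. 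Second, the parts of your proposal that are correct --- boundedness and the properties of $\omega(\xi)$, the telescoping-potential argument showing that a diagonal monotone static map is CCW without strict monotonicity, and the final step that a connected $\omega$-limit set inside a set of isolated equilibria is a single point --- do match the paper's reasoning.
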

In the proof of Theorem \ref{thm:multistability_convergence}, we will make use of the notion of Counterclockwise (CCW) input-output dynamics introduced in \cite{angeli2006systems, angeli2007multistability, angeli2009convergence}. The main definitions and results used in the paper are reported in the Appendix.

\subsection{Proof of Theorem \ref{thm:multistability_convergence}}
As a first step, we rewrite \eqref{eq:bistable_network_first} as the positive feedback interconnection
\begin{equation}
  \begin{split}
    &\dot{x}_{1, i}=-\gamma_1 x_{1, i}+V_1 g_2(x_{2,i}) +u_i \\
    & \dot{x}_{2, i}=-\gamma_2 x_{2, i}+V_2g_1(x_{1,i}) \\
    & y_i = x_{1, i}\\
    & u = -Ly.
  \end{split}
  \label{eq:single_system_network}
\end{equation}
$i=1,\ldots,N$, $y = [x_{1,1}, \dots, x_{1,N}]^T$ and $u=[u_{1}, \dots, u_{N}]$. 
We outline the proof in the following steps. In Proposition \ref{prop:positive_loop} we show that each component of \eqref{eq:single_system_network} has strictly counterclockwise (CCW) input-output dynamics from $u_i$ to $y_i$. Then, by exploiting the properties of positive feedback loop interconnections of systems with CCW input-output dynamics, we characterize the omega limit sets of the networked system \eqref{eq:bistable_network_first}. 

The next Lemma 
is instrumental in the proof of Proposition \ref{prop:positive_loop} and is an adaptation of \cite[Lemma III.1]{angeli2007multistability}. Note that, unlike the original result in \cite[Lemma III.1]{angeli2007multistability}, we do not require $g$ to be a strictly increasing function of the input.
\begin{lemma}  \label{prop:scalar_monotone_preliminaries}
Consider the scalar input-output system 
\begin{equation}\label{eq:input_with_sat}
\dot{x} = - k x + g(u) = f(x, u),\quad y=h(x)
\end{equation}
where $k>0$, $u \in \mathcal{U}$, and $\mathcal{U}$ is the set of Lebesgue measurable, locally essentially bounded functions valued in $U=[0, u_{\operatorname{max}}]$. Let $X=[x_{\operatorname{min}}, x_{\operatorname{max}}], x_{\operatorname{max}}\geq g(u_{\operatorname{max}})/k$ and $x_{\operatorname{min}} \leq g(0)/k$, be the state space of \eqref{eq:input_with_sat}. Assume that $h$ and $g$ are either both monotonically increasing or both monotonically decreasing Lipschitz continuous functions. Then, \eqref{eq:input_with_sat} has CCW input-output dynamics with respect to arbitrary density functions $\rho$, i.e. 
\begin{equation}\label{eq:ccw_inequality}
    \liminf\limits_{T\to +\infty} \int\limits_{0}^{T} \dot{y}(t)\int\limits_{0}^{u(t)}\rho(\eta, h(x(t))) d\eta\ dt > -\infty 
\end{equation}
for every $x_0 \in X$, $u \in \mathcal{U}$, and $\rho$ satisfying Definition   \ref{def:density_function}.
\end{lemma}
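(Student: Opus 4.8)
The plan is to exhibit a bounded, absolutely continuous auxiliary function $W$ of the state such that the CCW integrand dominates $\frac{d}{dt}W(x(t))$ pointwise almost everywhere; telescoping then bounds the integral below by $-(\sup_X W - \inf_X W)$, which is finite because $X$ is compact, giving the claimed $\liminf > -\infty$ uniformly in $T$. Throughout I would assume, without loss of generality, that $h$ and $g$ are both monotonically increasing, the decreasing case being symmetric. Writing $P(u,y) := \int_0^u \rho(\eta,y)\,d\eta$, I first record that $P \ge 0$ and, since $\rho \ge 0$ by Definition \ref{def:density_function}, $P(\cdot,y)$ is nondecreasing in its first argument and bounded on the compact set $U \times h(X)$.

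The central construction is a saturated generalized inverse of $g$: for $x \in X$ set $u^*(x) := \inf\{u \in U : g(u) \ge kx\}$, with the conventions $u^*(x)=0$ when $kx \le g(0)$ and $u^*(x)=u_{\max}$ when $kx \ge g(u_{\max})$. Because $g$ is continuous and nondecreasing, $u^*$ is nondecreasing (hence Borel measurable) and satisfies $g(u^*(x)) = kx$ whenever $kx \in [g(0), g(u_{\max})]$. I then define $W(x) := \int_{x_{\min}}^{x} h'(s)\,P\big(u^*(s), h(s)\big)\,ds$; the integrand is measurable (product of the a.e.\ derivative $h'$, the monotone map $u^*$, and the continuous map $P$) and bounded, so $W$ is Lipschitz and bounded on $X$.

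The key step is the pointwise inequality. Since $h$ is Lipschitz and $x(\cdot)$ is absolutely continuous, the chain rule gives $\dot y = h'(x)\dot x$ and $\frac{d}{dt}W(x) = h'(x)P(u^*(x),h(x))\,\dot x$ for a.e.\ $t$, with $\dot x = g(u) - kx$. Subtracting, the excess of the integrand over $\frac{d}{dt}W$ equals $\dot x\,\big[h'(x)P(u,y) - h'(x)P(u^*(x),y)\big] = A(u)B(u)$, where $A(u) := g(u)-kx$ and $B(u) := h'(x)\big(P(u,y)-P(u^*(x),y)\big)$. Both $A$ and $B$ are nondecreasing in $u$; by the choice of $u^*$, $B$ vanishes at $u=u^*(x)$ while $A$ either vanishes there too (interior case $kx \in [g(0),g(u_{\max})]$) or is single-signed with the same sign as $B$ (the saturated boundary cases). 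In every case $A(u)B(u) \ge 0$ for all admissible $u$, so the integrand is $\ge \frac{d}{dt}W(x)$ a.e.; integrating over $[0,T]$ yields $\int_0^T \dot y\,P(u,y)\,dt \ge W(x(T)) - W(x(0)) \ge -(\sup_X W - \inf_X W)$.

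I expect the main obstacle to be the rigorous handling of non-strict monotonicity together with the boundary behaviour: when $g$ has flat pieces, $u^*$ is only a generalized (set-valued) inverse, and when $kx$ leaves $[g(0),g(u_{\max})]$ the matching sign change of $A$ and $B$ must be enforced by saturation rather than by the identity $g(u^*)=kx$. Verifying measurability and boundedness of $W$ under only Lipschitz regularity, so that the a.e.\ chain rule and the telescoping step are legitimate, is the other delicate point; this is precisely where the argument must depart from \cite{angeli2007multistability}, which relied on strict monotonicity to invert $g$ globally.
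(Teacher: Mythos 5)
Your proof is correct and, while it rests on the same core mechanism as the paper's proof (sign-matching of the integrand against a generalized inverse of $g$), it packages the argument in a genuinely different and cleaner way. The paper argues trajectory-by-trajectory: it first proves positive invariance of $(g(0)/k,\,g(u_{\operatorname{max}})/k)$, then partitions $X\times\mathcal{U}$ into the classes $\mathcal{S}_{\mathrm{l}}$, $\mathcal{S}_{\mathrm{m}}$, $\mathcal{S}_{\mathrm{h}}$ according to whether the solution stays below that interval, eventually enters it, or stays above it, and uses a different comparison on each class: plain nonnegativity of the integrand on $\mathcal{S}_{\mathrm{l}}$; the potential $F$ built from $\gamma=g^{-1}(k\,\cdot)$, plus a finite transient correction $\Delta$, on $\mathcal{S}_{\mathrm{m}}$; and the potential $P$ built with upper limit $u_{\operatorname{max}}$ on $\mathcal{S}_{\mathrm{h}}$. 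Your saturated inverse $u^{*}$ glues these three comparisons into the single potential $W$ (equal to using $0$, $\gamma$, and $u_{\operatorname{max}}$ in the three regions of state space), and your inequality $A(u)B(u)\ge 0$ reproduces the paper's three sign arguments simultaneously and pointwise in $(x,u)$; consequently you need no invariance lemma, no partition of $X\times\mathcal{U}$, and no bookkeeping term $\Delta$, and trajectories that migrate between regions require no special treatment --- a single telescoping of $W$ along the whole trajectory suffices. Your construction also implicitly repairs what appears to be a typo in the paper's definition of $F$, whose inner upper limit of integration reads $\gamma(x)$ where it must be $\gamma(\xi)$ for the claimed time derivative to hold. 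What the paper's route buys in exchange is mainly expository: on $\mathcal{S}_{\mathrm{l}}$ and $\mathcal{S}_{\mathrm{h}}$ the dynamical picture (monotone output) is explicit, and the structure stays close to Lemma III.1 of the cited Angeli--Sontag work. One caveat applies to both proofs equally: the claim that the comparison function ($F$, $P$, or your $W$) is Lipschitz tacitly assumes that $\int_0^{u}\rho(\eta,y)\,d\eta$ is bounded for $(u,y)\in U\times h(X)$, which Definition \ref{def:density_function} (measurable, positive a.e., locally summable) does not by itself guarantee; making either argument fully rigorous requires this mild extra property of $\rho$ or a more careful absolute-continuity argument in place of Lipschitzness.
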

\begin{proof}
Let $g$ and $h$ be monotonically increasing functions. For all input values $u(t) \in U$, $f(x_{\operatorname{min}}, u(t)) \geq 0$ and $f(x_{\operatorname{max}}, u(t)) \leq 0$, therefore the state space $X$ is positively invariant under the dynamics defined by \eqref{eq:input_with_sat}. We claim that the set $(g(0), g(u_{\operatorname{max}}))$ is also positively invariant. For any initial condition $x_0\in (g(0)/k, g(u_{\operatorname{max}})/k)$, the corresponding solution is given by
    \begin{equation}\label{eq:solution_input_sat}
    \begin{aligned}
          x(t) = &\operatorname{e}^{-k t}x_0 + \int\limits_{0}^t \operatorname{e}^{-k (t-\tau)} g(u(\tau)) d\tau.
    \end{aligned}
    \end{equation}
    Since $ g(0)\leq g(u(t))\leq g(u_{\operatorname{max}})$, from \eqref{eq:solution_input_sat} we obtain
    \begin{equation}
    x(t)\geq g(0)/k+\operatorname{e}^{-k t}\left(x_0 -g(0)/k\right) > g(0)/k
    \end{equation}
    and 
    \begin{equation}
        x(t) \leq  g(u_{\operatorname{max}})/k+\operatorname{e}^{-k t}\left(x_0 -g(u_{\operatorname{max}})/k\right) < g(u_{\operatorname{max}})/k,
    \end{equation}
thus proving the claim.
 \noindent We can therefore define the partition $X \times \mathcal{U} = \mathcal{S}_{\mathrm{l}} \cup \mathcal{S}_{\mathrm{m}} \cup \mathcal{S}_{\mathrm{h}}$, where
 \begin{align*}
    \mathcal{S}_{\mathrm{l}} &= \left\{(x_0, u): x(t, x_0, u(t))\in [x_{\operatorname{min}}, g(0)/k],\forall t\geq 0\right\}, \\
        \mathcal{S}_{\mathrm{h}}&= \left\{(x_0, u) : x(t, x_0, u(t)) \in [g(u_{\operatorname{max}})/k, x_{\operatorname{max}}], \forall t\geq 0\right\},\\
\mathcal{S}_{\mathrm{m}}&= \left\{(x_0, u) : \exists t^*\geq0:  \forall t\geq t^*,
    \right.\\\left.
    \right.& \left. x(t, x_0, u(t))\in (g(0)/k, g(u_{\operatorname{max}}/k)\right.\}.
    \end{align*}
We now show that \eqref{eq:ccw_inequality} holds on each component of the partition.
Consider a generic pair $x_0 \times u \in \mathcal{S}_{\mathrm{l}}$. By definition of $\mathcal{S}_{\mathrm{l}}$ and since $g$ is non-decreasing,   $f(x(t), u(t)) =  - k x(t) + g(u(t))\geq 0$, for every $t\geq 0$.  
Since $h$ is Lipschitz continuous, its derivative is defined almost everywhere and, by assumption, $Dh(x):=\frac{\operatorname{d}}{\operatorname{d}x}h(x) \geq 0$, where defined. We conclude that $\dot{y}(t) = D h \left(x(t)\right) f(x(t), u(t)) \geq 0, \forall t\geq 0$ almost everywhere and therefore
\begin{equation} \label{eq:ccw_property}
   \liminf \limits_{T \to + \infty} \int_{0}^{T} \dot{y}(t)\int_{0}^{u(t)} \rho(\mu, y(t))d\mu dt > -\infty,
\end{equation}
 \\
for every $\rho$ satisfying Definition A.1.

We now show that the inequality \eqref{eq:ccw_inequality} holds for all pairs $x_0 \times u \in \mathcal{S}_{\mathrm{m}}$. Since the function $g$ is non-decreasing, $\forall x \in (g(0)/k, g(u_{\operatorname{max}})/k)$ we can define  $\gamma(x) := g^{-1}(kx)$ where $g^{-1}$ is the generalized inverse function of the monotone function $g$, in the sense of Definition \ref{def:gen_inverse_modified}. The function $\gamma(x)$ is strictly increasing by Proposition \ref{prop:pseudo_inverse_properties}, and bounded.
Since $\dot{y} =  D h(x) f(x, u)$, $\dot{y}>0 \implies f(x, u)=-k x+g(u)>0 \iff g(u)>k x \implies u > \gamma(x)$ with the final implication following from Proposition \ref{prop:pseudo_inverse_properties}. Analogously, $\dot{y}<0 \implies u < \gamma(x)$ and therefore we obtain the following inequality  
\begin{equation} \label{eq:S_m}
     \dot{y}(t) \int\limits_{0}^{u(t)}\rho(\eta, y(t)) d\eta \geq \dot{y}(t) \int\limits_{0}^{\gamma(x(t))}\rho(\eta, y(t)) d\eta.
\end{equation}
The function
\begin{equation}
    F(x) := \int \limits_{0}^x  D h(\xi) \int\limits_{0}^{\gamma(x)}\rho(\eta, h(\xi))d\eta  d\xi
\end{equation}
is Lipschitz in $x$ since it is the integral of a bounded function. Furthermore, $F(x(t))$ is absolutely continuous, since it is the composition of a Lipschitz function with the absolutely continuous function $x(t)$. We conclude that its time derivative 
\begin{align}
    \frac{\operatorname{d}}{\operatorname{d}t}F(x(t)) 
    = \dot{y}(t)\int\limits_{0}^{\gamma(x(t))}\rho(\eta, y(t)) d\eta 
\end{align}
 is defined almost everywhere.
Integrating both sides of \eqref{eq:S_m} we obtain
     \begin{equation*}
         \int \limits_{0}^T  \dot{y}(t) \int\limits_{0}^{u(t)}\rho(\eta, y(t)) d\eta dt \geq F(x(T)) - F(x(t^*)) + \Delta
     \end{equation*}
where 
\begin{equation*}
    \Delta = \int \limits_{0}^{t^*}  \dot{y}(t) \int\limits_{0}^{u(t)}\rho(\eta, y(t)) d\eta dt
\end{equation*}
is finite and accounts for the interval of time during which the solution $x(t)$ is outside of $(g(0)/k, g(u_{\operatorname{max}})/k)$. Therefore, by the continuity of $F$ and boundedness of the trajectories, \eqref{eq:ccw_inequality} holds. Consider now a generic pair $x_0 \times u \in \mathcal{S}_{\mathrm{h}}$. From \eqref{eq:input_with_sat} it holds that  $\dot{y}(t) = Dh (x(t)) f(x(t), u(t)) \leq 0, \forall t \geq 0$ almost everywhere, and therefore
\begin{align*}
     \dot{y}(t)\int_{0}^{u(t)} \rho(\mu, y(t))d\mu dt \geq 
     \dot{y}(t)\int_{0}^{u_{\operatorname{max}}} \rho(\mu, y(t))d\mu dt .
\end{align*}
Integrating both sides we obtain
\begin{equation}
         \int \limits_{0}^T  \dot{y}(t) \int\limits_{0}^{u(t)}\rho(\eta, y(t)) d\eta dt \geq P(x(T)) - P(x(0))
\end{equation}
where
\begin{equation}
    P(x) = \int \limits_{0}^x  D h(\xi) \int\limits_{0}^{u_{\operatorname{max}}}\rho(\eta, h(\xi))d\eta  d\xi
\end{equation}
from which \eqref{eq:ccw_inequality} follows.

The proof for the case of $g$ and $h$ mononotically decreasing follows the same lines. 
\end{proof}

\begin{proposition}
  \label{prop:positive_loop}
  Consider the following two-dimensional system 
  \begin{equation}
    \begin{split}
      & \dot{x}_{1} = -\gamma_1 x_{1} + V_1 g_2(x_{2}) + u\\
      & \dot{x}_{2} = -\gamma_2 x_{2} + V_2 g_1(x_{1})\\
      & y = x_{1}
    \end{split}
    \label{eq:positive_loop_first}
  \end{equation}
  where $g_i$, $i=1,2$, satisfy Assumption \ref{ass:monotonicity_boundedness_lipschitz}. Assume that the input signal $u(t) \in [ u_{\operatorname{min}}, u_{\operatorname{max}}]$ is bounded. Then, the system has strictly CCW input-output dynamics from $u$ to $y$.
\end{proposition}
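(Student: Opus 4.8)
The plan is to realize \eqref{eq:positive_loop_first} as the positive feedback interconnection of two scalar subsystems, each of which is CCW by Lemma \ref{prop:scalar_monotone_preliminaries}, and then to close the loop using the interconnection properties of CCW systems recalled in the Appendix (in the spirit of \cite{angeli2006systems, angeli2007multistability}). The point of working through the scalar Lemma is that it already supplies CCW without any strict monotonicity of the $g_i$, so the only genuinely new work is the combination of the two blocks.

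First I would isolate the two scalar blocks. Block A is the $x_2$-dynamics, read as a system with input $x_1$ and output $v := V_1 g_2(x_2)$; it is exactly of the form \eqref{eq:input_with_sat} with $k=\gamma_2$, driving nonlinearity $V_2 g_1$, and output map $V_1 g_2$. Since Assumption \ref{ass:monotonicity_boundedness_lipschitz} forces $g_1$ and $g_2$ to be monotone in the same direction, the pair $(V_2 g_1, V_1 g_2)$ is admissible for Lemma \ref{prop:scalar_monotone_preliminaries}, so Block A is CCW from $x_1$ to $v$. Block B is the $x_1$-dynamics, read as the first-order lag $\dot{x}_1 = -\gamma_1 x_1 + w$, $y=x_1$, driven by $w := v + u$; applying the Lemma with $k=\gamma_1$ and the identity as both the driving and output maps shows that Block B is CCW from $w$ to $y$, and in fact \emph{strictly} CCW, the strictness being furnished by the strictly monotone (linear) dependence in Block B rather than by the nonlinearities. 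Before combining I would check that the hypotheses of the Lemma hold along every trajectory: forward invariance of $\mathcal{B}$ from Proposition \ref{prop:box_invariance} together with boundedness of $u$ guarantees the state-space bounds $x_{\operatorname{max}}\ge g(u_{\operatorname{max}})/k$ and $x_{\operatorname{min}}\le g(0)/k$ for each block, after enlarging the invariant region to absorb the additive bounded input $u$ if necessary.

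Next I would close the loop. The compartment is recovered by setting the input of Block A equal to the output $y=x_1$ of Block B, and the input of Block B equal to the output $v$ of Block A plus the external signal $u$. Because $g_1$ and $g_2$ share the same monotonicity, the internal loop $x_1 \to x_2 \to v \to x_1$ is a positive feedback loop; this is consistent with bistability and does not obstruct the CCW property, since CCW systems may well be multistable. Invoking the interconnection result for CCW systems, the interconnection of the two CCW blocks with the external input entering additively is CCW from $u$ to $y$, and strict CCW is inherited from the strictly CCW Block B.

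The hard part will be the combination step, and it is precisely the $\rho$-dependence that makes it nontrivial: the inequality \eqref{eq:ccw_inequality} must hold for \emph{every} admissible density $\rho$, and since $\rho(\eta,y)$ depends on the integration variable $\eta$, the external-input integral $\int_0^{u}\rho(\eta,y)\,d\eta$ does not split additively across the internal feedback signal $v$, so one cannot merely add the two subsystem inequalities. The role of the interconnection theorem is exactly to aggregate the $\rho$-wise storage constructions of Lemma \ref{prop:scalar_monotone_preliminaries} (the functions $F$ and $P$ built there) into a single bounded-below functional for the loop, using that trajectories remain in $\mathcal{B}$ and that at least one block is strict. The two delicate points I would verify carefully are the sign compatibility of the interconnection with the orientation of the density functions, and the propagation of strictness from Block B to the composite; the case of both $g_i$ monotonically decreasing then follows by the symmetric argument already indicated at the end of the proof of Lemma \ref{prop:scalar_monotone_preliminaries}.
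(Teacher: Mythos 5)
Your decomposition and your treatment of the $x_2$ block coincide exactly with the paper's proof: the compartment is split into the $x_2$ subsystem (input $x_1$, output $v = V_1 g_2(x_2)$), whose CCW property w.r.t.\ arbitrary density functions follows from Lemma \ref{prop:scalar_monotone_preliminaries}, and the scalar $x_1$ dynamics driven by $v+u$. The gap is in your loop-closure step. You invoke ``the interconnection result for CCW systems'' recalled in the Appendix, but the Appendix contains only definitions (density functions, CCW, strict CCW) and generalized-inverse facts --- there is no such theorem there, and no theorem of the generality you need (``positive feedback of a CCW block and a strictly CCW block, with additive external input, is strictly CCW from $u$ to $y$'') is available in the cited literature either. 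You yourself identify the obstruction --- the $\rho$-weighted integrals do not split additively across the internal signal $v$, so the two subsystem inequalities cannot simply be summed --- and then assume it is resolved by an unnamed theorem; that is assuming precisely what has to be proved. The paper closes the loop differently: it does \emph{not} treat the $x_1$ block as a standalone CCW system at all, but instead verifies the hypotheses of Lemma III.2 of \cite{angeli2007multistability}, which is tailored to exactly this structure: a scalar $\mathcal{C}^1$ dynamics $f_1(x_1,v,u) = -\gamma_1 x_1 + v + u$, strictly increasing in $v$ and $u$ with $\partial f_1/\partial u = 1 > 0$, in positive feedback with a system that is CCW w.r.t.\ arbitrary density functions. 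The strict CCW property of the composite is the \emph{conclusion} of that lemma; it is not inherited from any subsystem.

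A secondary unsupported claim compounds this: you assert that Block B ($\dot{x}_1 = -\gamma_1 x_1 + w$, $y = x_1$) is \emph{strictly} CCW ``by linearity,'' but Lemma \ref{prop:scalar_monotone_preliminaries} only delivers plain CCW, i.e.\ the bound \eqref{eq:ccw_inequality}; strictness requires the additional term $-\tilde{\rho}(|\dot{y}(t)|)/(1+\gamma(|x(t)|))$ in the integrand (Definition A.3), which nothing in that lemma's proof produces. There is also a hypothesis mismatch: the lemma's input space is $[0, u_{\operatorname{max}}]$, whereas your Block B input $w = v + u$ need not be nonnegative (in the network $u = -Ly$ takes negative values); this is fixable by a translation, but it is another check your argument skips. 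To repair the proof, replace the generic interconnection step and the strictness claim for Block B with the direct verification of the hypotheses of \cite[Lemma III.2]{angeli2007multistability} on $f_1$, which is what the paper does.
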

\begin{proof}
System \eqref{eq:positive_loop_first} can be represented as a positive loop interconnection of the $x_1$ and $x_2$ subsystems 
  \begin{equation}
    \begin{split}
      \dot{x}_1 & = -\gamma_1 x_1 + v + u = f_1(x_1, v, u)  \\
      \dot{x}_2 & = -\gamma_2 x_2 +  V_2g_1(x_{1}) \\
      v &= V_1 g_2(x_2) \\
      y &= x_1.
    \end{split}
  \end{equation}
Assumption \ref{ass:monotonicity_boundedness_lipschitz} ensures bounded states and output for bounded input signals with $u(t) \in [u_{\operatorname{min}}, u_{\operatorname{max}}]$. Proposition \ref{prop:scalar_monotone_preliminaries} guarantees that the $x_2$ subsystem with input $x_1$ and output $v$ has CCW input-output dynamics with respect to arbitrary density functions. Furthermore, $f_1$ is $\mathcal{C}^1$, strictly increasing with respect to $u$ and $v$ and $\partial f_1/ \partial u=1>0$. Therefore, by Lemma III.2 in \cite{angeli2007multistability} we can conclude that the system in \eqref{eq:positive_loop_first} has strictly CCW input-output dynamics with respect to arbitrary density functions from $u$ to $y$.
\end{proof}

\begin{figure}
  \centering
   \includegraphics[width=0.92\linewidth]{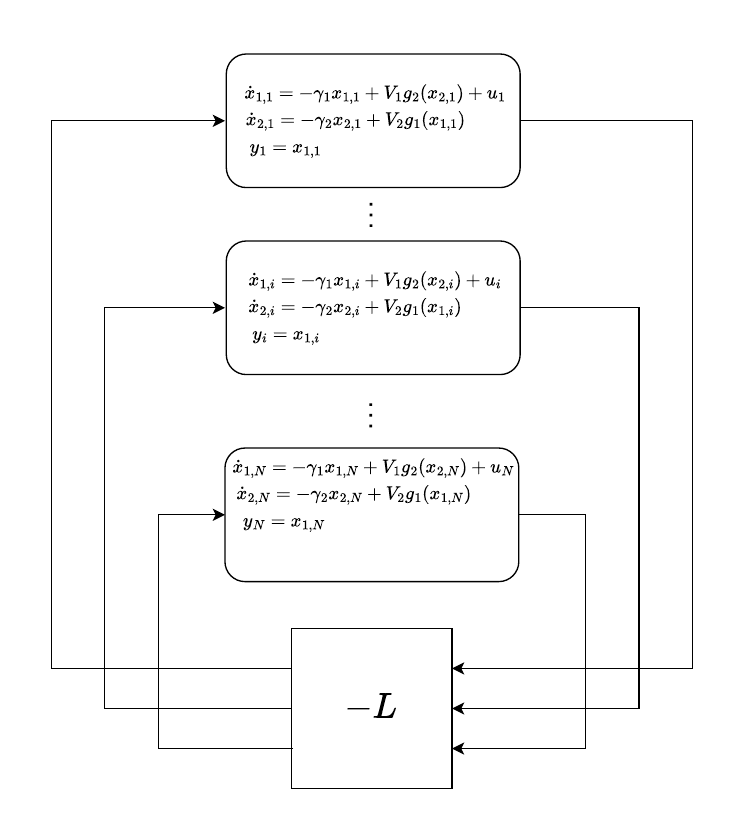}
  \caption{Positive feedback interconnection scheme representing the mathematical model in \eqref{eq:single_system_network}. }
  \label{fig:cocoercivitydiagram}
\end{figure}

By leveraging the results above, we can now conclude the proof of Theorem \ref{thm:multistability_convergence}. System \eqref{eq:single_system_network} is a bank of SISO scalar nonlinear systems in feedback with the static map $-L: y \to -Ly$ where $L$ is the symmetric Laplacian matrix in \eqref{eq:bistable_network_vector}. In Proposition \ref{prop:positive_loop}, we showed that each of the subsystems in \eqref{eq:single_system_network} with input $u_i$ and output $y_i$ has strict CCW input-output dynamics. Furthermore, the map $-L: y \to -Ly$ is a  system with CCW input-output dynamics since the Laplacian matrix $L$ is symmetric \cite[Proposition 6.2]{angeli2009convergence}. Finally, by \cite[Theorem 1]{angeli2009convergence} we can conclude that for any initial condition $\xi$, the $\omega$-limit set $\omega(\xi)$ is contained in the largest invariant set in $M:=\{x:\ \dot{y} \equiv 0\}$ where  $y = [x_{1,1}, \dots, x_{1,N}]^T$.  Furthermore, as the forward orbits are bounded, the omega limit set $\omega(\xi)$ is non-empty, compact and connected \cite[Proposition 1.111]{chicone2006ordinary}. In $M$, since $\dot{y}=0$, each component ${x}_{1,i}$ must be constant, and we write ${x}_{1,i}(t)=\bar{x}_{1,i}$. 
Therefore, the dynamics of each compartment is independent with respect to each other and reads
\begin{equation}
  \begin{split}
    & \dot{x}_{1,i} = - \gamma_1 \bar{x}_{1,i} + V_1 g_2(x_{2,i}) + \overline{b}_i = 0\\
    & \dot{x}_{2,i} = - \gamma_2 x_{2,i} + V_2 g_1(\bar{x}_{1,i}) = - \gamma_2 x_{2,i} + c_i,
  \end{split}
  \label{eq:omega_ccw}
\end{equation}
where $\overline{b}_i=\sum_{j} a^1_{i, j}\left(\overline{x}_{1, j}-\overline{x}_{1, i}\right)$ is the constant term depending on the diffusive coupling and $c_i= V_2g_1(\bar{x}_{1,i})$.  Consider an initial condition $\xi_0$ in $\omega(\xi)$ and assume that $\xi_0$ is not an equilibrium. The invariance of $\omega(\xi)$ implies that the solution $x(t)$ corresponding to the initial condition $\xi_0$ is entirely contained in $\omega(\xi)$, that is, $x(t)\in \omega(\xi), \forall t\in \mathbb{R}$. Furthermore, for the dynamics in \eqref{eq:omega_ccw}, 
$\lim\limits_{t \to \pm \infty} |x_{2,i}(t)|=+\infty$ against the fact that $\omega(\xi)$ is bounded. Therefore, all points in $\omega(\xi)$ must be equilibria.
Furthermore,  if the equilibria of the system are isolated, then for all initial conditions $ \xi \in \mathcal{B}^N$, $\lim\limits_{t\to + \infty}\norm{x(t) - \overline{x}} \to 0$ where $\overline{x}$ is an equilibrium of \eqref{eq:bistable_network_first}.

\section{Synchronization Analysis}
In the following, we derive sufficient conditions on the algebraic connectivity of the coupling graph that ensure that all the equilibria of \eqref{eq:bistable_network_vector} are synchronized. 
\begin{thm} \label{prop:global_synchronization_bound}
    Consider a generic equilibrium $\overline{X} = [\overline{X}_1, \overline{X}_2]$ of  \eqref{eq:bistable_network_vector}. Assume that Assumption 1 holds and let  $\ell_1$ and $\ell_2$ be the Lipschitz constants of $g_1$ and $g_2$. Let $\lambda_2$ be the algebraic connectivity of the Laplacian matrix $L$. If the following condition is satisfied:
    \begin{equation}\label{eq:synchronization_condition}
        \frac{V_1\ell_1 V_2 \ell_2}{(\gamma_1+\lambda_2)\gamma_2} <1,
    \end{equation}
  then $\overline{X}_1 \in \operatorname{span}\{{\mathds{1}_N}\}$ and $\overline{X}_2 \in \operatorname{span}\{{\mathds{1}_N}\}$, where $\mathds{1}_N \in \mathbb{R}^N$ is a vector with all entries equal to one.
\end{thm}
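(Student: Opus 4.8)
The plan is to show that the \emph{disagreement} components of any equilibrium vanish under condition \eqref{eq:synchronization_condition}. Writing the equilibrium conditions for \eqref{eq:bistable_network_vector} gives $(\Gamma_1 + L)\overline{X}_1 = V_1 G_2(\overline{X}_2)$ and $\Gamma_2 \overline{X}_2 = V_2 G_1(\overline{X}_1)$. I would introduce the centering projection $P := I_N - \tfrac{1}{N}\mathds{1}_N\mathds{1}_N^T$ onto the subspace orthogonal to $\mathds{1}_N$, and set $\delta_1 := P\overline{X}_1$ and $\delta_2 := P\overline{X}_2$. Since $\operatorname{span}\{\mathds{1}_N\}$ is exactly the synchronization subspace, the claim reduces to proving $\delta_1 = 0$ and $\delta_2 = 0$.

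The key estimate I would establish first is a contraction property of $P$ composed with a componentwise Lipschitz map: for $G_\ell$ with Lipschitz constant $\ell_\ell$, one has $\norm{P G_\ell(X)} \le \ell_\ell \norm{P X}$. This follows from the identity $\norm{Pv}^2 = \tfrac{1}{2N}\sum_{i,j}(v_i - v_j)^2$ (the centered norm equals the averaged pairwise differences) together with the componentwise bound $|g_\ell(x_i) - g_\ell(x_j)| \le \ell_\ell |x_i - x_j|$. I expect this lemma to be the crux of the argument; everything downstream is a routine energy estimate. Note that it uses only the Lipschitz bound, not invertibility of $G_\ell$, so it remains valid for merely (non-strictly) monotone regulation functions.

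Next I would left-multiply the first equilibrium relation by $\delta_1^T$. Using $P = P^T = P^2$ and $L\mathds{1}_N = 0$, so that $\delta_1^T L \overline{X}_1 = \delta_1^T L \delta_1 \ge \lambda_2 \norm{\delta_1}^2$ by the algebraic-connectivity bound on $\mathds{1}_N^{\perp}$, and $\delta_1^T \Gamma_1 \overline{X}_1 = \gamma_1\norm{\delta_1}^2$, the left-hand side is at least $(\gamma_1 + \lambda_2)\norm{\delta_1}^2$. On the right, $\delta_1^T G_2(\overline{X}_2) = \delta_1^T P G_2(\overline{X}_2) \le \norm{\delta_1}\,\norm{P G_2(\overline{X}_2)} \le \ell_2\norm{\delta_1}\norm{\delta_2}$ by Cauchy--Schwarz and the lemma, whence $(\gamma_1 + \lambda_2)\norm{\delta_1} \le V_1\ell_2\norm{\delta_2}$ (reading off $\delta_1=0$ directly if $\norm{\delta_1}=0$). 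The identical manipulation on the second relation, multiplying by $\delta_2^T$ and noting the absence of any Laplacian term, gives $\gamma_2\norm{\delta_2} \le V_2\ell_1\norm{\delta_1}$.

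Finally I would chain the two inequalities to obtain $(\gamma_1+\lambda_2)\norm{\delta_1} \le \tfrac{V_1 V_2 \ell_1\ell_2}{\gamma_2}\norm{\delta_1}$. Condition \eqref{eq:synchronization_condition} is precisely $\tfrac{V_1 V_2 \ell_1\ell_2}{\gamma_2} < \gamma_1 + \lambda_2$, so the resulting coefficient is strictly positive and forces $\norm{\delta_1} = 0$; then $\gamma_2\norm{\delta_2}\le V_2\ell_1\norm{\delta_1} = 0$ yields $\norm{\delta_2}=0$ as well. Hence $\overline{X}_1, \overline{X}_2 \in \operatorname{span}\{\mathds{1}_N\}$, which is the assertion.
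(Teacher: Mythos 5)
Your proof is correct, and it reaches the same conclusion by a genuinely different (and slightly more self-contained) route than the paper. The paper first eliminates $\overline{X}_2$ by substitution, obtaining the single fixed-point equation $(\Gamma_1+L)\overline{X}_1=\tilde{G}(\overline{X}_1)$ with the composed map $\tilde{G}(\cdot)=V_1G_2\bigl(\tfrac{V_2}{\gamma_2}G_1(\cdot)\bigr)$, projects both sides with $Q^TQ=I_N-\tfrac{1}{N}\mathds{1}_N\mathds{1}_N^T$, and derives a contradiction between a lower bound $(\gamma_1+\lambda_2)\norm{\overline{X}_1-\alpha\mathds{1}_N}$ on one side and the Lipschitz upper bound $\tfrac{V_1\ell_1V_2\ell_2}{\gamma_2}\norm{\overline{X}_1-\alpha\mathds{1}_N}$ on the other, the two sides being equal by the equilibrium equation. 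You instead keep the cascade structure of the two equilibrium relations, multiply each by its own disagreement vector $\delta_\ell=P\overline{X}_\ell$, and chain the two resulting quadratic-form inequalities; your auxiliary lemma $\norm{PG_\ell(X)}\le\ell_\ell\norm{PX}$ (which also follows directly from $PG_\ell(\alpha\mathds{1}_N)=0$ and nonexpansiveness of $P$, without the pairwise-difference identity) plays the role of the paper's estimate \eqref{eq:lipschitz_inequality}. What your version buys: the Laplacian enters only through the quadratic form $\delta_1^TL\delta_1\ge\lambda_2\norm{\delta_1}^2$, which is exactly the paper's definition of algebraic connectivity and requires no spectral/operator-norm reasoning about the restriction of $\Gamma_1+L$ to $\mathds{1}_N^\perp$ (the step behind the paper's inequality \eqref{eq:l_Q_reduced}, which implicitly uses symmetry of $L$ so that $\mathds{1}_N^\perp$ is invariant); your argument therefore transfers verbatim to the directed-graph notion of algebraic connectivity adopted in the paper. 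What the paper's version buys: composing the two maps collapses the argument to a single inequality and makes the constant $\tfrac{V_1\ell_1V_2\ell_2}{\gamma_2}$ appear at once as the Lipschitz constant of $\tilde{G}$, at the cost of the substitution step. Both proofs handle merely monotone (non-strictly-monotone) regulation functions, since only Lipschitz bounds are used.
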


\begin{proof}
    For a generic equilibrium  $\overline{X} = [\overline{X}_1, \overline{X}_2]$ of  \eqref{eq:bistable_network_vector}, it must hold that
\begin{equation}
    (\Gamma_{1} + L)\overline{X}_{1}=\tilde{G}(\overline{X}_1)
\end{equation}
where $\tilde{G}(\overline{X}_1):=G_2(\frac{V_2}{\gamma_2}G_1(\overline{X_1}))$.
If such an equilibrium exists, then  
\begin{equation}
    Q^T Q(\Gamma_{1} + L)\overline{X}_{1}= Q^T Q \tilde{G}(\overline{X}_1),
\end{equation}
where
\begin{equation*}
    Q=\left[\begin{array}{ccccc}
-1+(N-1) \nu & 1-\nu & -\nu & \cdots & -\nu \\
-1+(N-1) \nu & -\nu & 1-\nu & \ddots & \vdots \\
\vdots & \vdots & \ddots & \ddots & -\nu \\
-1+(N-1) \nu & -\nu & \cdots & -\nu & 1-\nu
\end{array}\right],
\end{equation*}
and
\begin{equation*}
    \nu=\frac{N-\sqrt{N}}{N(N-1)} \ .
\end{equation*}
From the definition of $Q$, it follows that  $Q \mathds{1}_N=0$, and $Q^T Q  = I_N-\frac{1}{N} \mathds{1}_N \mathds{1}_N^T$. Therefore, we can write 
\begin{equation*}
    Q^T Q(\Gamma_{1} + L)\left(\overline{X}_{1} -\alpha \mathds{1}_N\right) = Q^T Q\left(\tilde{G}(\overline{X}_{1}) - \tilde{G}(\alpha \mathds{1}_N) \right),
\end{equation*}
where $\alpha = \frac{1}{N}\mathds{1}_N^T \overline{X}_{1}$. Assume that $\overline{X}_1 \notin \operatorname{span}\{\mathds{1}_N\}$. Then
\begin{equation} \label{eq:l_Q_reduced}
\norm{Q^T Q(\Gamma_{1} + L)\left(\overline{X}_{1} -\alpha \mathds{1}_N\right)}_2 \geq (\gamma_1 + \lambda_2) \norm{\overline{X}_{1} -\alpha \mathds{1}_N}_2
\end{equation}
where $\lambda_2$ is the algebraic connectivity of the Laplacian $L$. $\tilde{G}$ has a Lipschitz constant equal to $\frac{V_1\ell_1 V_2 \ell_2}{\gamma_2}$, and therefore
\begin{equation} \label{eq:lipschitz_inequality}
\norm{ Q^T Q\left(\tilde{G}(\overline{X}_{1}) - \tilde{G}(\alpha \mathds{1}_N) \right)}_2 \leq  \frac{V_1\ell_1 V_2 \ell_2}{\gamma_2} \norm{\overline{X}_{1} -\alpha \mathds{1}_N}_2 .
\end{equation}
Since
\begin{equation*} \label{}
\frac{V_1\ell_1 V_2 \ell_2}{(\gamma_1+\lambda_2)\gamma_2} <1,
\end{equation*}
the inequalities in \eqref{eq:l_Q_reduced} and \eqref{eq:lipschitz_inequality} lead to a contradiction, as it must hold that:
\begin{align*}
&\norm{Q^T Q(\Gamma_{1} + L)\left(\overline{X}_{1} -\alpha \mathds{1}_N\right)}_2 = \\
&\norm{Q^T Q\left(\tilde{G}(\overline{X}{1}) - \tilde{G}(\alpha \mathds{1}_N) \right)}_2 . 
\end{align*}
Therefore, for all equilibria $\overline{X} = [\overline{X}_1, \overline{X}_2]$, $\overline{X}_1$ $ \in \operatorname{span}\{\mathds{1}_N\}$. Since $\overline{X}_2 = V_2/\gamma_2 G_1(\overline{X_1})$, we conclude  that $\overline{X}_2$ $ \in \operatorname{span}\{\mathds{1}_N\}$.
\end{proof}
\begin{remark}
In virtue of Theorem \ref{prop:global_synchronization_bound} and Theorem \ref{thm:multistability_convergence}, we can conclude that if the algebraic connectivity of the coupling graph is sufficiently large, all the solutions of \eqref{eq:bistable_network_vector} asymptotically converge to synchronized equilibria, where the states of each compartment approach each other to a constant value.         
\end{remark}


\section{Multistability Analysis}
\iftrue
  \label{section:all_explicit_sync}

  This section focuses on the multistability analysis of a subclass of the network models \eqref{eq:bistable_network_first}. 
  Specifically, let $g_2(x) = x$ and $g_1(x) = g(x)$, a piecewise linear function defined in \eqref{eq:g_function} and characterized by the positive parameters $\theta$ and $\delta$. 
  The resulting system reads
   \begin{equation}
      \begin{aligned}
          & \dot{x}_{1, i}=-\gamma_1 x_{1, i}+V_1 x_2+\sum_{j} a_{i, j}\left(x_{1, j}-x_{1, i}\right), \\
          & \dot{x}_{2, i}=-\gamma_2 x_{2, i}+V_2g(x_1).
      \end{aligned}
      \label{eq:bistable_network_cluster}
    \end{equation}
The dynamics of this piecewise affine model can be seen as an approximation of the dynamics of $N$ bistable switches interconnected by a quorum sensing mechanism. Indeed, in the case of an all-to-all interconnection, it is analogous to those described in \cite{rai2012prediction, fujimoto2013design, schuster2023parameters}, in the limit that autoinducer diffusion is much more rapid than degradation. Under this assumption, intracellular and extracellular concentrations of the autoinducer molecule can be approximated to be equal, as verified in experimental studies, in different biological systems \cite{kaplan1985diffusion}.

In the second part of this section, for an all-to-all homogeneous interconnection, we characterize the local stability of the equilibria of \eqref{eq:bistable_network_cluster} and investigate their location as the system's parameters and coupling strength are varied.

  The following assumption will be used throughout the remainder of this section.
  \begin{assumption}
    The system parameters of \eqref{eq:bistable_network_cluster} satisfy
    $$
      \frac{V_1 V_2}{\delta \gamma_1 \gamma_2}>1+\frac{\theta}{\delta}.
     $$
    \label{ass:bistability_bounds}
  \end{assumption}
  \vspace{-17pt}
  Assumption \ref{ass:bistability_bounds} guarantees that the uncoupled system has two stable equilibria
  $P_{\mathrm{ON}}=\left(\left(V_1 V_2\right) /\left(\gamma_2 \gamma_1\right), V_2 / \gamma_2\right), P_{\mathrm{OFF}}=(0,0)$ and an unstable equilibrium $P_{\mathrm{s}}=\left(\bar{x}_1, \bar{x}_2\right)$ with
  $$
    \begin{aligned}
    \bar{x}_1 &  = \frac{V_1}{\gamma_1} \bar{x}_2\\
      \bar{x}_2 & =\frac{V_2 \theta}{\delta \gamma_2}\left(\frac{V_1 V_2}{\delta \gamma_1 \gamma_2}-1\right)^{-1}. 
    \end{aligned}
  $$

  It is useful to partition the phase space $\mathcal{B}^N$ into different \textit{domains} such that the restriction of the vector field is linear-affine in each of them.
  \begin{definition} Let  $\mathcal{T} \subset \mathbb{R}^{N}$ denote the set of $N$ dimensional vectors with entries in $\{-1, 0, +1\}$. We can associate each element $\alpha$ in $\mathcal{T}$ with a \textit{domain} $\Omega_{\alpha}$ defined as
    \begin{equation*}
      \begin{aligned}
        \Omega_{\alpha}:=\left\{x\in\right. \mathcal{B}^N \mid & x_{1, j} > \theta + \delta,\text{ if } \alpha_{j}=1                 \\
               & x_{1, j} < \theta,\text{ if } \alpha_{j}=-1    \\
        \theta\leq   & x_{1, j}\leq\theta + \delta,\ \left.\text{if }\alpha_{j}=0\right\}.
      \end{aligned}
      \label{eq:def_omega}
    \end{equation*}
    A domain $\Omega_{\alpha}$ is called \textit{saturated} if $\alpha \in \Lambda_{e} :=\{\alpha \in \mathcal{T}: \alpha_{j} \in \{-1, 1\}\}$, \textit{linear} if $\alpha \in \Lambda_0 :=\{\alpha \in \mathcal{T}: \alpha_{j} = 0\}$, and \textit{mixed} if $\alpha \in \Lambda_m := \mathcal{T}/\left(\Lambda_0 \cup \Lambda_e \right)$.
  \end{definition}
  {\begin{definition}
      Given a saturated domain $\Omega_{\alpha}$, we say that $\Omega_{\alpha}$ has an average level of activation $\overline{q}:=m/N,\ m=0, \dots,\ N$ where $m$ is the number of entries in $\alpha$ equal to $1$. 
    \end{definition}}
    
  In the following, we characterize the stability of equilibria in the different types of domains. Similarly to \eqref{eq:bistable_network_vector}, we define the state vector $x=[X_1^T, X_2^T]^T$ such that 
 the restriction of the vector field in \eqref{eq:bistable_network_cluster} to a domain $\Omega_\alpha$ is described by the linear-affine system
    \begin{equation}
      \dot{x} = M_{\alpha} x + b_{\alpha},
    \end{equation}
    where
    \begin{equation}
      M_\alpha = \begin{bmatrix}
        M_{11}(\alpha) & M_{12}(\alpha) \\
        M_{21}(\alpha) & M_{22}(\alpha)
      \end{bmatrix}
      \label{eq:M_partition}
    \end{equation}
    with
    \begin{equation}\begin{split}
        M_{11}(\alpha) &= -(\Gamma_1 + L)\\
        M_{12}(\alpha) &= V_1 I_N\\
        M_{21}(\alpha) &= V_2/\delta\  \operatorname{diag}(w(\alpha_1), \dots, w(\alpha_N))\\
        M_{22}(\alpha) &= -\Gamma_2
      \end{split}
      \label{eq:M_description}
    \end{equation}
    and $w(\alpha_{i})=1$ if $\alpha_i=0$ and $w(\alpha_{i})=0$, otherwise.
    We skip here the explicit expression of $b_\alpha$ since it is not involved in the subsequent calculations. The following proposition characterizes the local stability of the equilibria of \eqref{eq:bistable_network_cluster}.

  \begin{proposition}
  Consider system \eqref{eq:bistable_network_cluster}.
    If an equilibrium exists in a saturated domain, it is unique and locally asymptotically stable. Furthermore, if  $    \frac{V_1V_2}{\gamma_1\gamma_2\delta} > N$, the equilibria in the interior of non-saturated domains are unstable.
    \label{prop:instability_sufficient_conditions}
  \end{proposition}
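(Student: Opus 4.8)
The plan is to work directly with the constant matrix $M_\alpha$ of \eqref{eq:M_partition}--\eqref{eq:M_description}: on the interior of any domain the dynamics are exactly the linear-affine system $\dot x = M_\alpha x + b_\alpha$, so local stability is dictated entirely by the spectrum of $M_\alpha$. For a saturated domain ($\alpha \in \Lambda_e$) every $w(\alpha_i)=0$, hence $M_{21}(\alpha)=0$ and $M_\alpha$ is block upper triangular, so its spectrum is the union of those of $-(\Gamma_1+L)$ and $-\Gamma_2$. Since $L$ is a symmetric positive semidefinite Laplacian and $\gamma_1,\gamma_2>0$, the eigenvalues of $\Gamma_1+L$ are bounded below by $\gamma_1>0$ and those of $\Gamma_2$ equal $\gamma_2>0$; thus every eigenvalue of $M_\alpha$ is strictly negative, $M_\alpha$ is Hurwitz, and the equilibrium is locally asymptotically stable. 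Invertibility of the Hurwitz matrix $M_\alpha$ makes the solution of $M_\alpha x + b_\alpha = 0$ unique, so there is at most one equilibrium in that domain.

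For a non-saturated domain the goal is to exhibit a real positive eigenvalue of $M_\alpha$. Writing the eigenvalue equation $M_\alpha (v,w)^T = \mu (v,w)^T$ and eliminating $w = \frac{V_2}{\delta(\mu+\gamma_2)}Dv$ (valid for $\mu\neq-\gamma_2$) from the first block row shows that $\mu$ is an eigenvalue of $M_\alpha$ precisely when the symmetric matrix
$$
B(\mu) := (\gamma_1+\mu)\,I_N + L - \frac{V_1V_2}{\delta(\gamma_2+\mu)}\,D
$$
is singular, where $D=\operatorname{diag}(w(\alpha_1),\dots,w(\alpha_N))$. The structural facts I would exploit are that $B(\mu)$ is symmetric, so $\lambda_{\min}(B(\mu))$ is real and continuous on $(-\gamma_2,+\infty)$, and that in a non-saturated domain $D$ has at least one unit diagonal entry, i.e. $k:=\operatorname{tr}D \geq 1$.

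The crux is to test $B(0)=\Gamma_1+L-\frac{V_1V_2}{\gamma_2\delta}D$ against the all-ones vector $\mathds{1}_N$. Since $\mathds{1}_N^T L\,\mathds{1}_N=0$ for any Laplacian and $\mathds{1}_N^T D\,\mathds{1}_N=k$, one obtains $\mathds{1}_N^T B(0)\,\mathds{1}_N = \gamma_1 N - \frac{V_1V_2}{\gamma_2\delta}k$, which is strictly negative exactly under the hypothesis $\frac{V_1V_2}{\gamma_1\gamma_2\delta}>N$ (using $k\geq 1$); hence $\lambda_{\min}(B(0))<0$. On the other hand, as $\mu\to+\infty$ the coupling term vanishes and $B(\mu)=\mu I_N+(\Gamma_1+L)+o(1)$, so $\lambda_{\min}(B(\mu))\to+\infty$. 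By the intermediate value theorem there is $\mu^\star>0$ with $\lambda_{\min}(B(\mu^\star))=0$, i.e. $B(\mu^\star)$ singular, yielding an eigenvector $(v,w)$ of $M_\alpha$ with eigenvalue $\mu^\star>0$. This unstable mode makes every equilibrium in the interior of the domain unstable.

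The main obstacle is the non-saturated case: the matrices $L$ and $D$ need not commute, so the spectrum of $M_\alpha$ cannot be read off by simultaneous diagonalization. The device that resolves this is the reduction to the one-parameter symmetric pencil $B(\mu)$ together with the $\mathds{1}_N$ test, which converts instability into a scalar sign condition matching the hypothesis precisely; because only $\mathds{1}_N^T L\,\mathds{1}_N=0$ is used, the argument is insensitive to the detailed graph topology.
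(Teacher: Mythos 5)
Your proof is correct, but it follows a genuinely different route from the paper's for the instability part (the saturated-domain argument is identical: $M_{21}(\alpha)=0$, block-triangular Hurwitz matrix, unique equilibrium by invertibility). For non-saturated domains, the paper argues by contradiction: if $M_\alpha$ had no eigenvalue with strictly positive real part, then $M_\alpha^{\varepsilon}=M_\alpha-\varepsilon I$ would be Hurwitz for every $\varepsilon>0$; since $M_\alpha^{\varepsilon}$ is Metzler, it invokes a cited criterion (a Metzler matrix is Hurwitz iff its $(2,2)$ block and the associated Schur complement are Hurwitz) and then shows that $\mathds{1}_N^{T}\bigl(M_\alpha^{\varepsilon}/(M_{22}(\alpha)-\varepsilon I_N)\bigr)\mathds{1}_N>0$ for small $\varepsilon$ under the hypothesis $\tfrac{V_1V_2}{\gamma_1\gamma_2\delta}>N$, a contradiction. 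Your pencil $B(\mu)$ is exactly the negative of that Schur complement with the shift $\varepsilon$ replaced by the spectral parameter $\mu$, and your $\mathds{1}_N$ test is the same computation (note the paper's displayed quadratic form carries a typo, an extra factor $\gamma_1$ in the first term); but instead of contradiction plus Metzler theory you run a direct argument: eliminate the second block of the eigenvalue equation, observe $\lambda_{\min}(B(0))<0$ and $\lambda_{\min}(B(\mu))\to+\infty$, and apply the intermediate value theorem to produce a real eigenvalue $\mu^{\star}>0$ of $M_\alpha$. What your version buys: it is self-contained (no external result on Metzler matrices, no $\varepsilon$-shift bookkeeping) and it yields slightly stronger information, namely an explicit real positive eigenvalue and hence a concrete unstable direction. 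What the paper's version buys: it delegates the spectral work to a standard positive-systems lemma, so no continuity-of-$\lambda_{\min}$ or limiting argument is needed once that lemma is accepted. Both arguments rely on the same two structural facts — symmetry of $L$ (hence of the reduced matrix) and $\mathds{1}_N^{T}L\,\mathds{1}_N=0$ — so neither is more topology-sensitive than the other.
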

  \begin{proof} 
      For all saturated domains, $M_{21}(\alpha)=0$, therefore the matrix $M_\alpha$ is block diagonal, nonsingular and Hurwitz. Since the vector field is differentiable in each saturated domain, if an equilibrium exists in a saturated domain $\Omega_{\alpha}$, it is unique and locally asymptotically stable. We now prove that the equilibria in the interior of mixed and linear domains are unstable if  $\frac{V_1V_2}{\gamma_1\gamma_2\delta} > N$. Let $\Omega_{\alpha}$ be a mixed or linear domain, we claim that the matrix $M_\alpha$ has at least one eigenvalue with strictly positive real part. We proceed by contradiction. Assume that $M_{\alpha}$ has no eigenvalues with strictly positive real part. Then, \begin{equation}
      M_\alpha^\varepsilon := \begin{bmatrix}
        M_{11}(\alpha) -\varepsilon I_N & M_{12}(\alpha) \\
        M_{21}(\alpha) & M_{22}(\alpha) -\varepsilon I_N
      \end{bmatrix}
      \label{eq:M_partition}
    \end{equation}
    must be Hurwitz, $\forall \varepsilon>0$. Since $M_{\alpha}^\varepsilon$
      is Metzler, it is Hurwitz if and only if both $M_{22}(\alpha) - \varepsilon I_N$ and
      the Schur complement $ M_\alpha^\varepsilon / \left(M_{22}(\alpha) - \varepsilon I_N\right)$ are Hurwitz \cite[Corollary 1]{souza2017note}. Since  $M_{22}(\alpha) - \varepsilon I_N$  is clearly Hurwitz, $M_\alpha^\varepsilon$ is Hurwitz  if and only if 
    \begin{equation*}
      \begin{split}
        M_\alpha^\varepsilon / \left(M_{22}(\alpha) - \varepsilon I_N\right) =  M_{11}(\alpha) -\varepsilon I_N + \frac{V_1 }{(\gamma_2+\varepsilon)}M_{21}(\alpha)\\
      \end{split}
    \end{equation*}
    is Hurwitz.
    Notice that $  M_\alpha^\varepsilon / \left(M_{22}(\alpha) - \varepsilon I_N\right)$ is symmetric and
    \begin{equation*}
      \mathds{1}_N^T  M_\alpha^\varepsilon / \left(M_{22}(\alpha) - \varepsilon I_N\right) \mathds{1}_N = -N(\gamma_1+\varepsilon) \gamma_1 + \frac{V_2 V_1 }{(\gamma_2+\varepsilon)\delta} n_\alpha 
      \label{eq:mixed_instability}
    \end{equation*}
    with $\mathds{1}_N$ the vector whose entries are all equal to 1 and $n_\alpha \in\{1,..., N\}$ depends on the domain $\Omega_{\alpha}$ and represents the number of species in the linear regime.
    By assumption, $\frac{V_1V_2}{\gamma_1\gamma_2\delta} > N$,  therefore there exists (small enough) $\varepsilon>0$ such that $\forall n_\alpha \in\{1,..., N\}$
    \[
     \mathds{1}_N^T  M_\alpha^\varepsilon / \left(M_{22}(\alpha) - \varepsilon I_N\right)  \mathds{1}_N>0.
    \]
Therefore, for small enough $\varepsilon>0$  
$M_{\alpha}^\varepsilon$ is not Hurwitz, thus proving the claim. 

  \end{proof}

  In the following, we study the location of (locally asymptotically stable) equilibria in the saturated domains as a function of the coupling strength $k$ for the case of a homogeneous all-to-all network, i.e. $L =  k\left(N I_N-\mathds{1}_N\mathds{1}_N^{\operatorname{T}}\right)$.

  Notice that in the two saturated domains with average level of activation $\overline{q}=0$ and $\overline{q}=1$, the equilibria exist and are unique, each corresponding to all compartments synchronized in the state $P_{\operatorname{OFF}}$ and $P_{\operatorname{ON}}$. In the rest of the paper, the following definition will be used to refer to a specific class of equilibria:
\begin{definition}
    A generic equilibrium    $\overline{X} = [\overline{X}_1, \overline{X}_2]$ of  \eqref{eq:bistable_network_vector} is said to be synchronized if $\overline{X}_1 \in \operatorname{span}\{{\mathds{1}_N}\}$ and $\overline{X}_2 \in \operatorname{span}\{{\mathds{1}_N}\}$.
\end{definition}

  \begin{thm}
    \label{thm:synchr_saturated}
    Consider system \eqref{eq:bistable_network_cluster} and assume $L =  k\left(N I_N-\mathds{1}_N\mathds{1}_N^{\operatorname{T}}\right)$.
    Let $\Omega_{\alpha}$ be a saturated domain with an average level of activation $\overline{q}=m/N$, $m \in \{1, \dots, N-1\}$. There exists a minimum gain $k^{\overline{q}}$ such that $\forall k > k^{\overline{q}}$, no equilibria are contained in the domain $\Omega_{\alpha}$.
    Additionally, there exists $k^s>0$ such that for every $k>k^s$, if an equilibrium exists in a saturated domain, it is synchronized.
  \end{thm}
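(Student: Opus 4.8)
The plan is to exploit the fact that inside a saturated domain the regulatory nonlinearity $g$ is frozen to a constant binary vector, so that the restricted dynamics is affine and its equilibrium — unique whenever it exists, by Proposition \ref{prop:instability_sufficient_conditions} — can be written in closed form and then tested against the defining inequalities of $\Omega_\alpha$. First I would impose $\dot{x}_{2,i}=0$, giving $\overline{X}_2 = (V_2/\gamma_2)G_1(\overline{X}_1)$, which inside $\Omega_\alpha$ reduces to the vector with entries $V_2/\gamma_2$ at the $m$ indices where $\alpha_i=1$ and $0$ elsewhere. Imposing $\dot{X}_1=0$ then yields the linear system $(\Gamma_1+L)\overline{X}_1 = V_1\overline{X}_2$, so that $\overline{X}_1 = V_1(\Gamma_1+L)^{-1}\overline{X}_2$.

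The key computational step is to invert $\Gamma_1+L = (\gamma_1+kN)I_N - k\,\mathds{1}_N\mathds{1}_N^T$. Using that $\mathds{1}_N$ is an eigenvector with eigenvalue $\gamma_1$ while every vector orthogonal to $\mathds{1}_N$ has eigenvalue $\gamma_1+kN$ (equivalently, Sherman--Morrison), the inverse acts as $1/\gamma_1$ on $\operatorname{span}\{\mathds{1}_N\}$ and as $1/(\gamma_1+kN)$ on $\mathds{1}_N^\perp$. Decomposing $\overline{X}_2 = \mu\,\mathds{1}_N + \tilde{X}_2$ with $\mu = mV_2/(N\gamma_2)$ and $\tilde{X}_2 \perp \mathds{1}_N$, the equilibrium has only two distinct entries: a value $x_{\mathrm{ON}}$ on the $m$ activated compartments and $x_{\mathrm{OFF}}$ on the remaining $N-m$. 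A direct computation then gives the gap
\begin{equation*}
    x_{\mathrm{ON}} - x_{\mathrm{OFF}} = \frac{V_1 V_2}{\gamma_2(\gamma_1 + kN)},
\end{equation*}
which is independent of $m$ and decreases monotonically to $0$ as $k\to+\infty$.

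For the first claim I would use that membership in the saturated domain $\Omega_\alpha$ forces $x_{\mathrm{ON}} > \theta+\delta$ and $x_{\mathrm{OFF}} < \theta$; subtracting yields the necessary condition $x_{\mathrm{ON}} - x_{\mathrm{OFF}} > \delta$, i.e. $V_1V_2/(\gamma_2(\gamma_1+kN)) > \delta$. This is violated exactly when $k > k^{\overline{q}} := \tfrac{1}{N}\bigl(V_1V_2/(\gamma_2\delta) - \gamma_1\bigr)$, which is strictly positive by Assumption \ref{ass:bistability_bounds}; hence no equilibrium lies in $\Omega_\alpha$ for $k>k^{\overline{q}}$. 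Since this threshold is uniform over $m\in\{1,\dots,N-1\}$, the second claim follows by setting $k^s := k^{\overline{q}}$: for $k>k^s$ the only saturated domains that can still contain an equilibrium are those with $m=0$ or $m=N$, whose unique equilibria are $P_{\mathrm{OFF}}$ and $P_{\mathrm{ON}}$, both lying in $\operatorname{span}\{\mathds{1}_N\}$ and therefore synchronized.

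The only genuinely technical part is the explicit inversion of $\Gamma_1+L$ and the bookkeeping that collapses $\overline{X}_1$ to two levels; conceptually, the statement rests on the observation that the desynchronizing component of $\overline{X}_2$ lives in $\mathds{1}_N^\perp$ and is damped by the factor $1/(\gamma_1+kN)$, so that increasing the coupling merges $x_{\mathrm{ON}}$ and $x_{\mathrm{OFF}}$ onto a common value and destroys every non-synchronized saturated equilibrium. The hard part will be phrasing the necessary gap condition carefully enough to cover all domains with $1\le m\le N-1$ using a single $m$-independent threshold; sharper, $m$-dependent thresholds could be recovered, if desired, from the individual constraints $x_{\mathrm{ON}}=\theta+\delta$ and $x_{\mathrm{OFF}}=\theta$.
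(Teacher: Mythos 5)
Your proof is correct, and its setup coincides with the paper's: both impose the equilibrium conditions in a saturated domain, invert $\Gamma_1+L$ via Sherman--Morrison, and observe that the equilibrium candidate takes only two values, $x_{\mathrm{ON}}$ on the $m$ activated compartments and $x_{\mathrm{OFF}}$ on the rest. Where you genuinely diverge is the elimination step. The paper keeps the two membership constraints $x_{\mathrm{ON}}>\theta+\delta$ and $x_{\mathrm{OFF}}<\theta$ separate and performs a case analysis on $\overline{q}$ (according to whether $V\overline{q}$, with $V=V_1V_2/(\gamma_1\gamma_2)$, lies below $\theta$, between $\theta$ and $\theta+\delta$, or above $\theta+\delta$), extracting from each constraint a threshold $k_1^{\overline{q}}$ or $k_2^{\overline{q}}$, defining $k^{\overline{q}}$ as the appropriate minimum, and finally $k^s=\max_{\overline{q}}k^{\overline{q}}$. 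You instead subtract the two constraints to obtain the $m$-independent necessary condition $x_{\mathrm{ON}}-x_{\mathrm{OFF}}=V_1V_2/(\gamma_2(\gamma_1+kN))>\delta$, which yields a single uniform threshold. Your route is shorter and fully suffices for the theorem as stated, since the statement only asserts the existence of such gains; it is also logically sound (uniqueness of the candidate equilibrium in a saturated domain follows from the affine structure alone, so the appeal to Proposition \ref{prop:instability_sufficient_conditions} is dispensable). What it gives up is quantitative sharpness, and that sharpness is what the paper exploits downstream: your $k^s=\frac{1}{N}\left(V_1V_2/(\gamma_2\delta)-\gamma_1\right)$ is exactly the bound $k^{\lambda}$ that Theorem \ref{prop:global_synchronization_bound} produces through \eqref{eq:synchronization_condition} (with $\ell_1=1/\delta$, $\ell_2=1$), so it diverges like $1/\delta$ as $\delta\to 0$, whereas the paper's per-$\overline{q}$ thresholds remain finite in that limit and are strictly smaller. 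This difference is precisely the gap between $k^{\lambda}$ and $k^s$ emphasized in the numerical section (Figure \ref{fig:k_s}), and the $\overline{q}$-dependent thresholds are what display the gradual transition from clustered multistability to bistability; as you note in your final sentence, recovering that content requires keeping the two constraints separate rather than merging them into the gap condition.
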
  
  \begin{proof}
    A generic equilibrium  $\overline{X} = [\overline{X}_1, \overline{X}_2]$ of  \eqref{eq:bistable_network_vector} must satisfy
    \begin{equation}
      \begin{aligned}
          & \overline{X}_{1}= V_1 (\Gamma_{1} + L)^{-1} \overline{X}_2 \\
          & \overline{X}_{2}=V_{2}/\gamma_2 G_1(\overline{X}_{1}).
      \end{aligned}
      \label{eq:bistable_network_vector_equilibrium}
    \end{equation}
    As $\Gamma_1 + L = k\left(N I_N-\mathds{1}\mathds{1}^{\operatorname{T}}\right)+\gamma_{1} I_N$, its inverse (Sherman-Morrison formula \cite{sherman1950adjustment, max1950inverting}) is
    \begin{equation}
      (\Gamma_1+ L)^{-1}=\frac{1}{N k+\gamma_{1}} I_N+\frac{1}{\gamma_{1}\left(N k+\gamma_{1}\right)} k \mathds{1}_N\mathds{1}_N^{\operatorname{T}}
    \end{equation}
    and by substituting it in \eqref{eq:bistable_network_vector_equilibrium}, we obtain
    \begin{equation}
      \overline{X}_{1}=V_{1}\left(\frac{1}{N k+\gamma_{1}} I_N+\frac{1}{\gamma_{1}\left(N k+\gamma_{1}\right)} k \mathds{1}_N\mathds{1}_N^{\operatorname{T}} \right) \frac{V_2}{\gamma_2} G_1(\overline{X}_{1}).
    \end{equation}
    Therefore, for a generic compartment $i$,
    \begin{equation}
      \begin{aligned}
          & \overline{x}_{1,i} = \frac{V_1 V_2/\gamma_2}{Nk + \gamma_1} g (\overline{x}_{1,i})+ \frac{V_1 V_2/\gamma_2}{\gamma_1(Nk + \gamma_{1})} N k \sum\limits_{i=1}^{N} g(\overline{x}_{1,i})/N \\
          & \overline{x}_{2,i}=V_{2}/\gamma_2 g_1(\overline{x}_{1,i}).
      \end{aligned}
      \label{eq:eq_saturated}
    \end{equation}

    Define $I_{\operatorname{ON}}(\alpha)=\{i: \alpha_i=1\}$ and $I_{\operatorname{OFF}}(\alpha)=\{i: \alpha_i=-1\}$. As the domain $\Omega_{\alpha}$ has an average level of activation $\overline{q}=m/N$, this means that $I_{\operatorname{ON}}(\alpha)$ and $I_{\operatorname{OFF}}(\alpha)$ have cardinality $m$, and $N-m$, respectively. Denote with $V:=
    \frac{V_1 V_2}{\gamma_1 \gamma_2}$,
    the equilibrium \textbf{ $\overline{X} = [\overline{X}_1, \overline{X}_2] \in \Omega_{\alpha}$} if and only if
    \begin{equation}
      \begin{split}
        \overline{x}_{1, i} &= \gamma_1\frac{V}{Nk + \gamma_1} + \frac{V}{Nk + \gamma_{1}} N k \overline{q} > \theta +\delta 
      \end{split},\  i \in I_{\operatorname{ON}}(\alpha)
      \label{eq:first_equation_true}
    \end{equation}
    and
    \begin{equation}
      \begin{split}
        \overline{x}_{1, i} &=  \frac{V}{Nk + \gamma_{1}} N k \overline{q} < \theta 
      \end{split},\ i \in I_{\operatorname{OFF}}(\alpha).
      \label{eq:second_equation_true}
    \end{equation}
    Therefore,   $\overline{X} = [\overline{X}_1, \overline{X}_2] \notin \Omega_{\alpha}$ if at least one of \eqref{eq:first_equation_true} and \eqref{eq:second_equation_true} is not verified, i.e. if
    \begin{equation}
      \gamma_1\frac{V}{Nk + \gamma_1} + \frac{V}{Nk + \gamma_{1}} N k \overline{q} \leq  {\theta} +  {\delta}
      \label{eq:first_equation_og}
    \end{equation}
    or
    \begin{equation}
      \frac{V}{Nk + \gamma_{1}} N k \overline{q} \geq  {\theta}
      \label{eq:second_equation_og}.
    \end{equation}
    We can now rewrite \eqref{eq:first_equation_og},   \eqref{eq:second_equation_og} as
    \begin{equation}
      kN\left(-V \overline{q} + ( {\theta}+ {\delta})  \right)  \geq \gamma_1 \left(V - ( {\theta} + {\delta}) \right)
      \label{eq:first_inequality}
    \end{equation}
    and
    \begin{equation}
      \left(V \overline{q} -  {\theta}\right)Nk \geq  {\theta} \gamma_{1}
      \label{eq:second_inequality}.
    \end{equation}
    The factor on the right handside of \eqref{eq:first_inequality} is always strictly positive under Assumption \ref{ass:bistability_bounds}. We now show that for any given $\overline{q}$, it is possible to find a minimum gain $k^{\overline{q}}$ such that for all $k > k^{\overline{q}}$, no equilibria exist in the domain $\Omega_{\alpha}$ with an average level of activation $\overline{q}$. We divide the analysis according to the value of $\overline{q}$.

    {1)} If $\overline{q}$ is such that $  {\theta}   <V \overline{q} < ( {\theta}+ {\delta})  $, by simple manipulation of \eqref{eq:first_inequality} and \eqref{eq:second_inequality} we obtain:
    \begin{equation}
      k\geq k_1^{\overline{q}} := \frac{\gamma_{1}\left(( {\theta} + {\delta}) - V\right)}{N\left(V \overline{q} - ( {\theta}+ {\delta})  \right)}
    \end{equation}
    or
    \begin{equation}
      k\geq k^{\overline{q}}_2:= \frac{ {\theta} \gamma_{1}}{N\left(V\overline{q} - {\theta} \right)}.
    \end{equation}Therefore,  $\forall k \geq k^{\overline{q}} = \operatorname{min}\{ k^{\overline{q}}_1,   k^{\overline{q}}_2\}$,  there are no equilibria in the saturated domains corresponding to the \textit{average level of activation} $\overline{q}$.
    
    {2)} If  $\overline{q}$ is such that $V \overline{q} \geq \left( {\theta}  + {\delta}\right)$, no positive value of $k$ can satisfy \eqref{eq:first_inequality}. From \eqref{eq:second_inequality} we can instead get the bound
    \begin{equation}
      k\geq k^{\overline{q}}_2:= \frac{ {\theta} \gamma_{1}}{N\left(V\overline{q} - {\theta} \right)}.
    \end{equation}
    Therefore, $\forall k \geq k^{\overline{q}}_2$ there are no equilibria in the saturated domains corresponding to the \textit{average level of activation} $\overline{q}$.
    
    {3)} If  $\overline{q}$ is such that $V \overline{q} \leq  {\theta}  \gamma_{1}$, no positive value of $k$ can make \eqref{eq:second_inequality} true. From \eqref{eq:first_inequality} we can instead get the bound
    \begin{equation}
      k\geq k_1^{\overline{q}} := \frac{\gamma_{1}\left(( {\theta} + {\delta}) - V\right)}{N\left(V \overline{q} - ( {\theta}+ {\delta})  \right)}.
    \end{equation}
    Therefore, $\forall k \geq k^{\overline{q}}_1$ there are no equilibria in the saturated domains corresponding to the \textit{average level of activation} $\overline{q}$. Finally, for all
    \begin{equation}\label{eq:k_s_definition}
        k> k^{\operatorname{s}}:= \max\limits_{\overline{q} \in \{m/N \,|\, m \in \{1, \dots, N-1\}\}} k^{\overline{q}},
    \end{equation} the only saturated domain with equilibria are the two saturated domains corresponding to the average level of activation $\overline{q}=0$ and $\overline{q}=1$.
  \end{proof}

  \section{Numerical Results}

\begin{figure*}[t]    
    \centering
    \begin{subfigure}{.5\textwidth}
      \centering
      \includegraphics[width=1\linewidth]{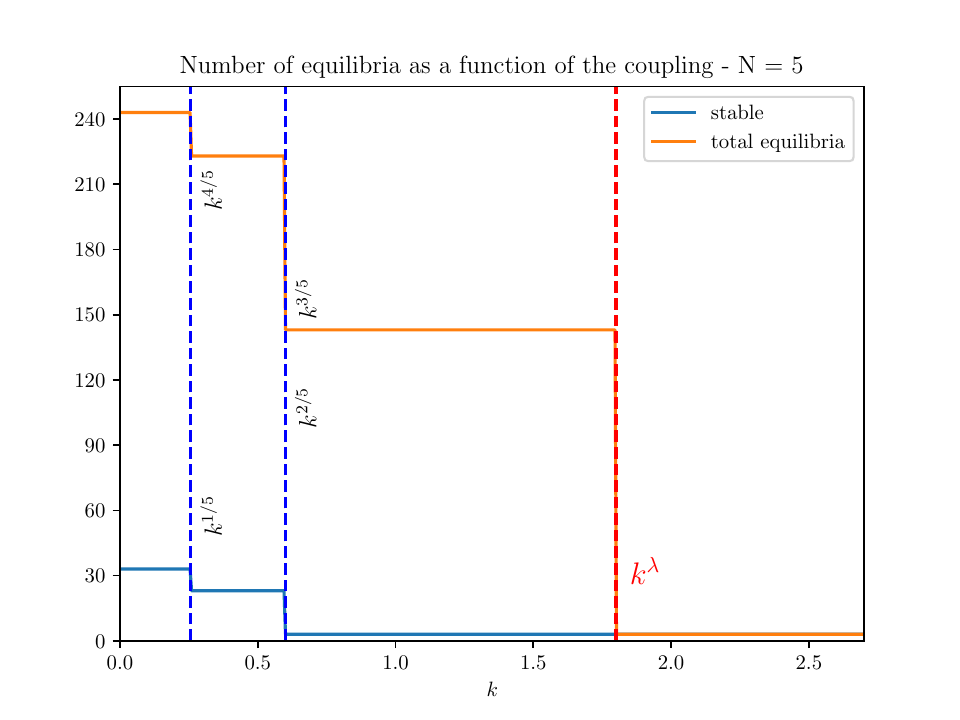}
      \subcaption{All-to-all topology}
      \label{fig:sfig1}
    \end{subfigure}%
    \begin{subfigure}{.5\textwidth}
      \centering
      \includegraphics[width=1\linewidth]{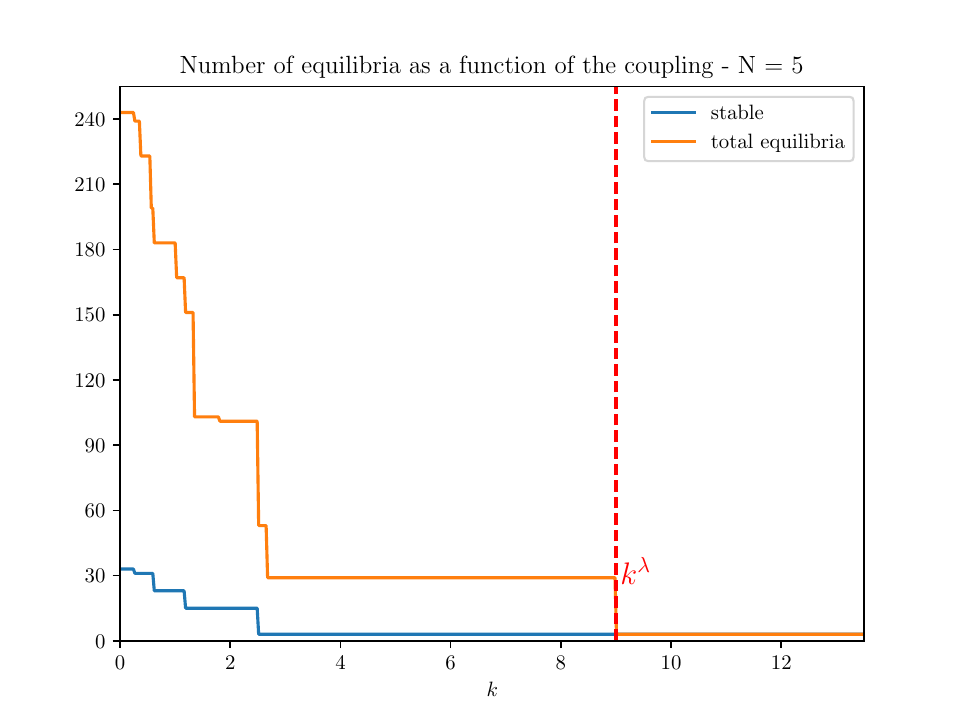}
      \subcaption{Star topology}
      \label{fig:sfig2}
    \end{subfigure} \\
    \centering
    \begin{subfigure}{.5\textwidth}
      \centering
      \includegraphics[width=1\linewidth]{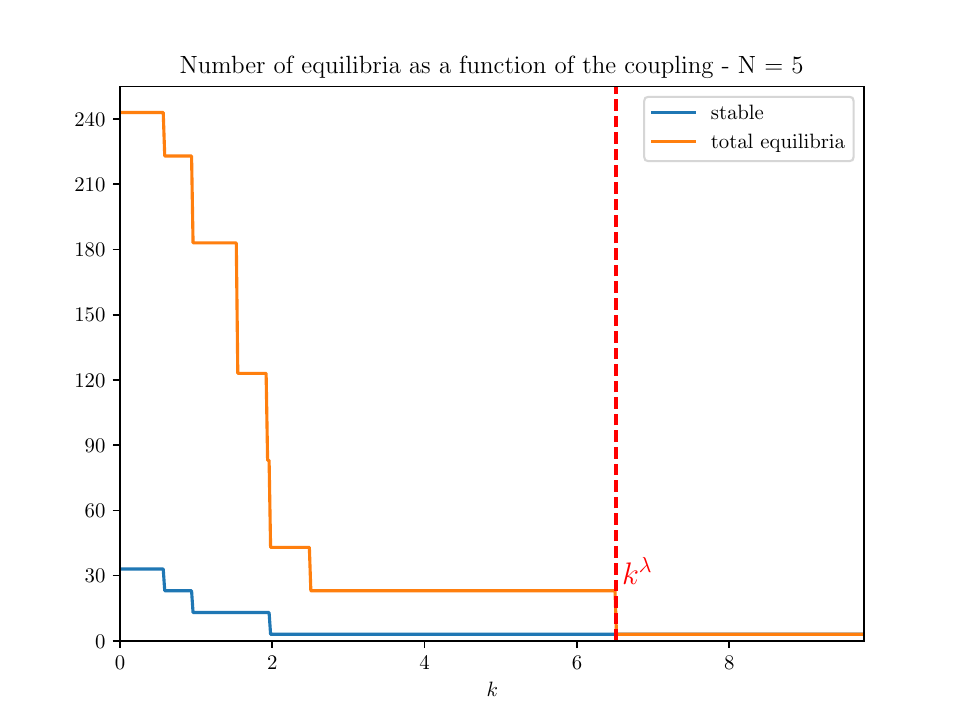}
      \subcaption{Loop topology}
      \label{fig:sfig1}
    \end{subfigure}%
    \begin{subfigure}{.5\textwidth}
      \centering
      \includegraphics[width=1\linewidth]{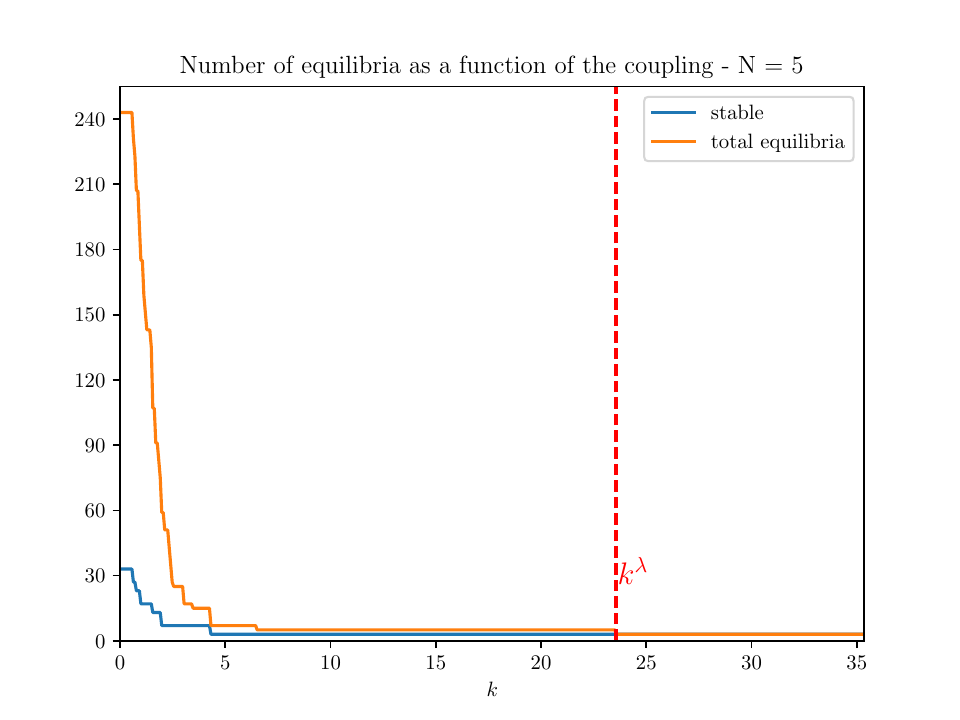}
      \subcaption{Line topology}
      \label{fig:sfig2}
    \end{subfigure}
    \caption{ In (a), the dotted vertical blue lines correspond to the values $k^{\overline{q}}$ computed analytically. The continuous blue line represents the total number of equilibria in all saturated domains as a function of the coupling parameter $k$. The continuous orange line represents the total number of equilibria in all domains (including non-saturated) as a function of the coupling parameter $k$. The dotted red line represents the minimum coupling gain $k^{\lambda}$ to ensure that only synchronized equilibria exist. In all figures, the minimum values for the blue and orange lines are respectively 2 and 3. The system's parameters used are are $V_1 = V_2 = 1$, $\gamma_{1}=\gamma_{2}=1$ and $\theta=0.45$, $\delta=0.1$.}
  \end{figure*}

  To illustrate the results developed in the previous sections, we consider a network \eqref{eq:bistable_network_cluster} of $N=5$ identical bistable compartments with $g_2(x_2) = x_2$, and $g_1(x_1)=g(x_1)$ defined as a piecewise function \eqref{eq:g_function} with parameters $V_1=V_2=1$, $\gamma_{1}=\gamma_{2}=1$, $\theta = 0.45$, $\delta=0.1$. The chosen parameters and activation functions guarantee that all the solutions converge to the set of equilibria (Theorem \ref{thm:multistability_convergence}) and that all equilibria in the interior of non-saturated domains are unstable (Proposition \ref{prop:instability_sufficient_conditions}).
  Figure 3 (a)-(d) illustrates the number of equilibria (both in the saturated and unsaturated domains) as a function of the homogeneous coupling 
$k$ for various interconnection topologies. The minimum coupling $k^{\lambda} =\left( \frac{V_1 V_2}{\gamma_2 \delta} - \gamma_1\right)/N$ (red dotted line), computed using Theorem \ref{prop:global_synchronization_bound}, represents the minimum coupling gain that ensures global synchronization.
  This is illustrated in Figure 3, as for $k>k^{\lambda}$, the network has only three synchronized equilibria, corresponding to the three equilibria of the uncoupled bistable system.  
  
  In Figure 3 (a) we depict the exact thresholds $k^{\overline{q}}$ (dotted blue lines) computed by using Theorem \ref{thm:synchr_saturated} in the case where the communication topology is homogeneous and all-to-all. These thresholds represent the coupling strength required to eliminate all the stable equilibria with an average level of  activation $\overline{q}$. In particular, we observe a gradual transition from multistability to bistability of the overall network. Indeed, for all $k>k^{\operatorname{s}}$, all stable equilibria are synchronized equilibria (Theorem \ref{thm:synchr_saturated}). The figure also suggests that the bound $k^{\operatorname{s}}$ guarantees convergence to the synchronized equilibria almost everywhere. It is worth commenting on the difference between $k^{\lambda}$ and $k^{\operatorname{s}}$. The bound $k^{\lambda}$ guarantees that only the three synchronized equilibria exist while $k^{\operatorname{s}}$ guarantees that the only locally asymptotically stable equilibria are the synchronized ones.     
  
  The gap between the bounds $k^{\lambda}$ and $k^{\operatorname{s}}$ provides a possible explanation on why, in many examples (e.g. the synchronization of Goodwin oscillators in \cite{scardovi2010synchronization}), the minimum coupling gain that guarantees synchronization is higher than the values empirically found in simulation. 
  
  This observation is particularly insightful when we consider the limiting case of $\delta \to 0$, i.e.  when the piecewise affine continuous function \eqref{eq:g_function} tends to a discontinuous function. 
  In this case, the minimum coupling gain that guarantees global synchronization computed with \eqref{eq:synchronization_condition} grows unbounded with the Lipschitz constant $\ell_1=1/\delta$ of the function $g_1$. Nevertheless, the minimum coupling gain computed in Theorem \ref{thm:synchr_saturated} approaches a finite value (Figure \ref{fig:k_s}).  
  

\begin{figure}[h!]
  \centering
  \includegraphics[width=0.95\linewidth]{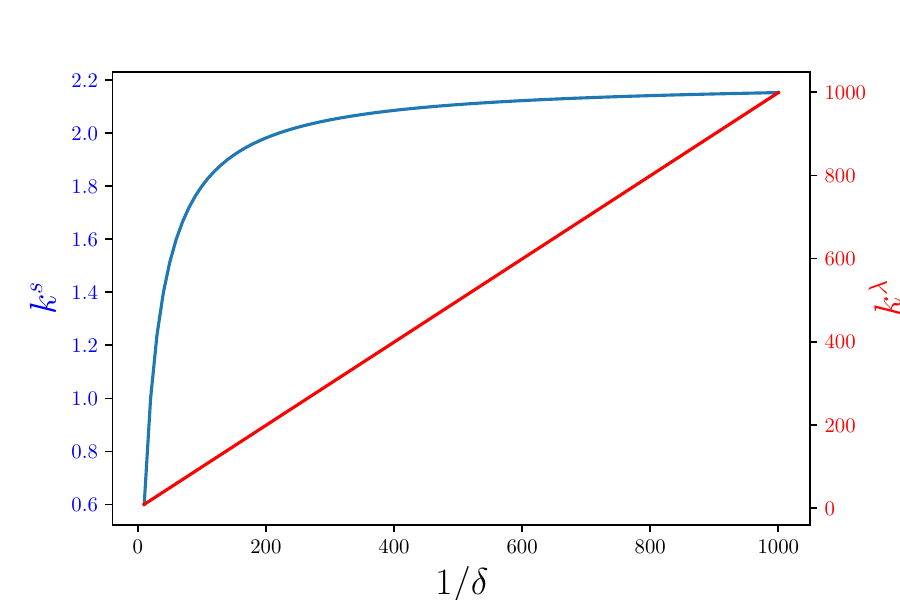}
  \caption{Synchronization bounds $k^{\lambda}$ (red line) and $k^s$ (blue line) for increasing values of the Lipschitz constant $\ell_2=1/\delta$ of the piecewise affine activation function $g_1(x_1) = g(x_1)$. Parameters used: $N=5$, $V_1 = V_2 = 1$, $\gamma_{1}=\gamma_{2}=1$ and $\theta=0.45$.}
  \label{fig:k_s}
\end{figure}

In our final experiments, we compare two different choices for the regulatory functions : piecewise affine approximations \eqref{eq:g_function} and hill-like smooth functions 
\eqref{eq:g_function_smooth}.
We evaluate how closely these two modelling choices agree in the characterization of clustering and synchronization properties of the networked system. According to \cite{pwa_gene_modelling_comparison}, these models should produce similar results when Hill coefficients are sufficiently large, with the exact threshold on the cooperativity degree depending on the specific parameters used. 

\begin{figure}[h]
  \centering
  \includegraphics[width=0.99\linewidth]{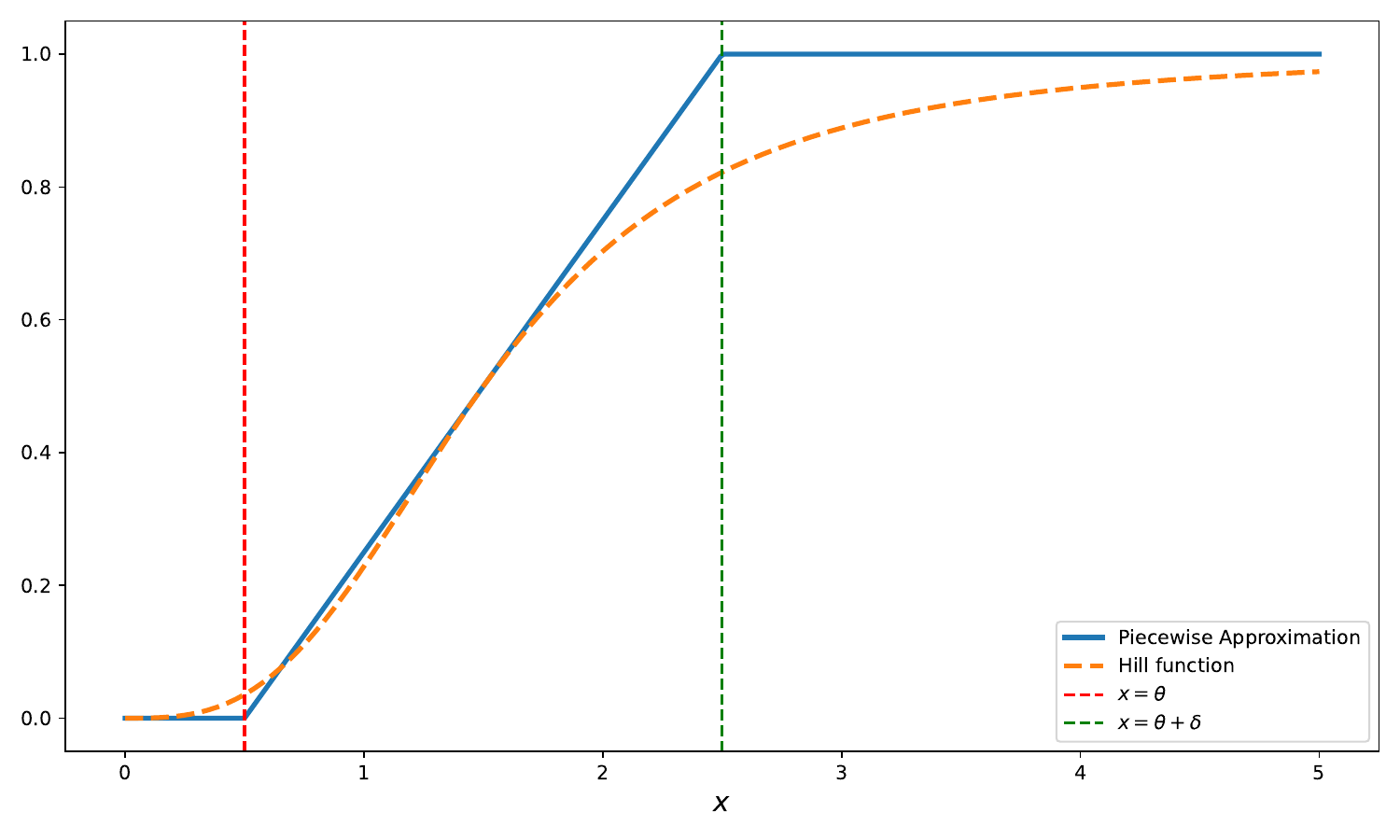}
  \caption{Hill function $g_s (x)$ with cooperativity degree $n=3$ (orange line) and piece-wise affine approximation $g(x)$ (blue line). Parameters used: 
$\theta_H = 1.5$, 
$\delta = 2$, 
$\theta = \theta_H-\delta/2$.}
  \label{fig:comparison_regulation_functions}
\end{figure}
\begin{figure}[h]
  \centering
  \includegraphics[width=0.99\linewidth]{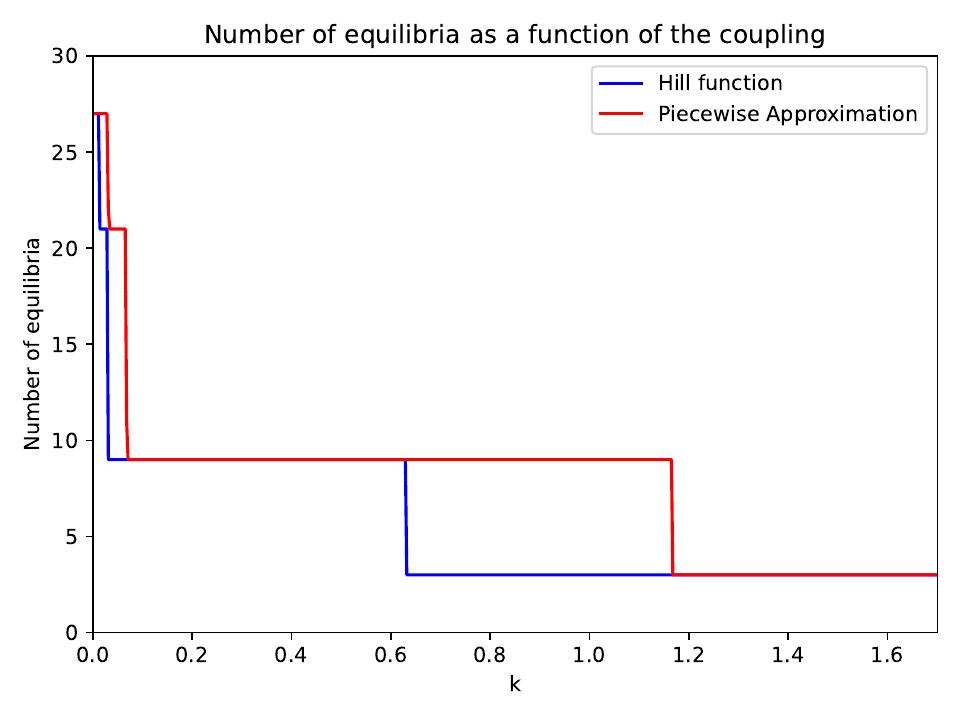}
  \caption{Number of equilibria for different values of $k$ in two variants of a all-to all network of \eqref{eq:bistable_network_first} with $N=3$ bistable compartments with $g_1(x_1) = g_s(x_1)$ (blue line) and $g_1(x_1) = g(x_1)$ (red line). For both lines, the minimum number of equilibria is $3$. For both variants $g_2(x_2) = x_2$. Parameters used: $\theta_H = 1.5$, 
$\delta = 2$, 
$\theta = \theta_H-\delta/2$, $\gamma_1 = \gamma_2 =1$, $V_1 = V_2 = 3$.}
  \label{fig:comparison_n_equlibria}
\end{figure} 

We consider two variants of a network \eqref{eq:bistable_network_first} with $N=3$ identical bistable compartments. In both variants, $g_2(x_2) = x_2$. For $g_1(x_1)$, we compare:

\begin{itemize}
    \item A Hill function \eqref{eq:g_function_smooth} $g_1(x_1) = g_s (x_1)$  with $\theta_H = 1.5$ and cooperative degree $n=3$;
    \item A piecewise affine activation function \eqref{eq:g_function} $g_1(x_1) = g(x_1)$ with parameters $\delta=2$ and $\theta = \theta_H - \delta/2 = 0.5$.
\end{itemize}

The parameters for the two regulatory functions are chosen such that  $g_s(x)|_{x = \theta_H} = g (x)|_{x = \theta_H}$ and  $\frac{\operatorname{d}}{\operatorname{d}x} g_s(x)|_{x = \theta_H} = \frac{\operatorname{d}}{\operatorname{d}x} g(x)|_{x = \theta_H} $ as shown in Figure \ref{fig:comparison_regulation_functions}. The comparison of the number of equilibria as a function of the coupling gain $k$ for the two model variants in Figure \ref{fig:comparison_n_equlibria} indicates that when the Hill function is employed, non-synchronized equilibria are eliminated at slightly smaller values of the coupling gain $k$. The number of equilibria in the Hill function model is more sensitive to changes in the coupling gain $k$.  Despite this difference, both variants of the models exhibit a similar overall behavior. 
This suggests that the methodology developed in this paper provides an alternative to the, computationally heavy, numerical computations to locate the equilibria in the smooth system.


\fi

\section{Conclusion}

In this paper, we examined the dynamics of a network of bistable systems coupled by diffusion. Under assumptions, we presented structural conditions (i.e., conditions that do not depend on the values of the system’s parameters) that ensure all solutions converge to the set of equilibria. Additionally, under the same technical assumptions, we provided sufficient conditions for the coupling graph to guarantee the bistability of the network and demonstrated that all solutions asymptotically converge to one of the equilibria of the uncoupled system.

For a biologically relevant model of a network of bistable systems, we utilized a piecewise linear approximation of the vector field to determine the location of equilibria as a function of the coupling gain and studied their local stability. This approach provided valuable insights into the impact of diffusive coupling on the stability and synchronization of bistable systems.

Future research directions include extending our results to more general bistable and multistable systems and incorporating other coupling mechanisms such as quorum sensing. Exploring these additional coupling mechanisms could further enhance our understanding of complex network dynamics in biological systems and expand the applicability of our findings to a broader range of real-world scenarios.

\appendix

\subsection{Systems with Counterclockwise Input-Output Dynamics}
   The following is from \cite{angeli2007multistability}.
   Consider the nonlinear differential equation with input $u$ and output $y$
\begin{equation}
\begin{aligned}
\dot{x} &= f(x, u), \ y = h(x),
\end{aligned}
\label{eq:state_space_gen_ccw}
\end{equation}
defined on the closed state space $X \subset \mathbb{R}^n$, where $f: X \times U \to \mathbb{R}^n$ is a locally Lipschitz function, with input signal $u \in \mathcal{U}$ (the set of $U\subset \mathbb{R}^m$-valued Lebesgue measurable locally essentially bounded functions) and the output map $h: X \to Y\subset \mathbb{R}^m$ is locally Lipschitz. The input-output transition map $\psi(t, \xi, u)$ can then be defined, for each initial condition $\xi \in X$ and each input signal $u \in \mathcal{U}$, by
\begin{equation}
\psi(t, \xi, u) = h(x(t, \xi, u)).
\end{equation}
This map relates the input signal $u$ and the initial state $\xi$ to the output of the system at time $t$.
Static input-output maps can be defined as
\begin{equation}
\psi(t, u) = h(u(t)),
\end{equation}
for some Lipschitz function $h: U \to Y$.  The transition map $\psi(t, \xi, u)$ need not be defined for all $t \geq 0$; however, for each $\xi \in X$ and each $u \in \mathcal{U}$ there exists $T_{\xi, u} \in$ $(0,+\infty]$ so that $\psi(t, \xi, u)$ is well defined for all $t \in\left[0, T_{\xi, u}\right)$.
It is useful to define the following subsets of $X \times \mathcal{U}$ :
$$
\begin{aligned}
\mathcal{S}_{\mathrm{fc}} \doteq\left\{(\xi, u) \in X \times \mathcal{U}: T_{\xi, u}=+\infty\right\} \\
\mathcal{S}_{b d} \doteq\left\{(\xi, u) \in X \times \mathcal{U}: T_{\xi, u}=+\infty\right. \\
\quad \text { and } u(\cdot), \psi(\cdot, \xi, u) \text { are bounded }\}
\end{aligned}
$$
We assume, without loss of generality that $0 \in U$, and that $U=U_1 \times U_2 \cdots \times U_m$ and $Y=Y_1 \times Y_2 \cdots \times Y_m$ for some nonempty intervals $U_i, Y_i \subset \mathbb{R}$. We are now ready to report some definitions.

\begin{definition} \label{def:density_function}\cite{angeli2007multistability}
We say that $\rho: U \times Y \rightarrow \mathbb{R}^m$ is a density function if it satisfies the following properties:
1) $\rho(u, y)=\left[\rho_1\left(u_1, y_1\right), \rho_2\left(u_2, y_2\right), \ldots, \rho_m\left(u_m, y_m\right)\right]$ for scalar functions $\rho_i: U_i \times Y_i \rightarrow \mathbb{R}$;
2) $\rho_i\left(u_i, y_i\right)>0$ for almost all $\left(u_i, y_i\right) \in U_i \times Y_i$ (according to Lebesgue measure) and all $i$ in $\{1 \ldots m\}$;
3) $\rho$ is a measurable and locally summable function (jointly in $u$ and $y$ ).
\end{definition}

\begin{definition} \cite{angeli2007multistability}
    We say that a system has CCW input-output (CCW input-output) dynamics with respect to the density function $\rho(u, y)$ if for any $(\xi, u) \in \mathcal{S}_{b d}$ the following inequality holds
$$
\liminf _{T \rightarrow+\infty} \int_0^T \dot{y}(t)^{\prime} \int_0^{u(t)} \rho(\mu, y(t)) d \mu d t>-\infty,
$$
where $y(t)=\psi(t, \xi, u)$ is assumed to be absolutely continuous.
\end{definition}

    \begin{definition} \cite{angeli2007multistability}
    	We say that a system has a strict counterclockwise input-output dynamics  with respect to the density function $\rho(u, y): U \times Y \to \mathbb{R}_{\ge 0}$ if the following inequality holds for all pairs $(\xi, u) \in \mathcal{S}_{b d}$
    	
    	\begin{equation*}
    	\liminf \limits_{T \to + \infty} \int_{0}^{T} \hspace{-6pt}\dot{y}(t)'\int_{0}^{u(t)} \hspace{-10pt}\rho(\mu, y(t))d\mu -\frac{\tilde{\rho}(|\dot{y}(t)|)}{1+ \gamma(|x(t)|)} dt> - \infty,
    		\label{eq:ccw_propery}
    	\end{equation*}
     where $\tilde{\rho}$ is a positive definite function, $\gamma \in \mathcal{K}$ and $y(t)=\psi(t, \xi, u)$ is absolutely continuous.
    \end{definition}

    \begin{remark}
    The assumption of $y(t)$ being absolutely continuous is a fundamental one as it ensures that the time derivative $\dot{y}(t)$ is defined almost everywhere. Such a property is always guaranteed under the assumption that the output map $h$ is Lipschitz.
    \end{remark}

\subsection{Generalized inverse}
The following definition of generalized inverse of a monotone function is taken from
\cite{de2015study}.

\begin{definition}\cite{de2015study} \label{def:gen_inverse}
 Let $f$ be a monotonically increasing function. The generalized inverse is function $f^{-1}$ defined by
\begin{equation}
    f^{-1}(y)=\inf \{x \in \overline{\mathbb{R}}: f(x)>y\},\  \overline{\mathbb{R}} =  \mathbb{R} \cup \{-\infty, +\infty\}
\end{equation}
\end{definition}

In the following we will use the following adapted definition (no need to consider extended $ \overline{\mathbb{R}}$).
\begin{definition} \label{def:gen_inverse_modified}
     Let $f$ be a monotonically increasing function defined on the  interval $[a, b]$. The generalized inverse is function $f^{-1}$ defined by
\begin{equation}
    f^{-1}(y)=\inf \{x \in [a, b]: f(x)>y\}, \ y\in (f(a), f(b)) 
\end{equation}
\end{definition}

The following proposition holds:
\begin{proposition}\label{prop:pseudo_inverse_properties} \cite{de2015study}
     The pseudo-inverse $f^{-1}$ of a monotonically increasing function $f$ has following properties:

\begin{enumerate}
    \item $f^{-1}$ is increasing, has left limits and is right continuous.
    \item The following implications hold for all $x \in \mathbb{R}$ and $y \in \mathbb{R}$:
    \begin{align}
        f(x) > y &\implies x \geq f^{-1}(y),\label{eq:prop_geq} \\ 
        f(x) = y &\implies x \leq f^{-1}(y), \\
        f(x) < y &\implies x \leq f^{-1}(y), \label{eq:prop_leq} \\ 
        f^{-1}(y) > x &\implies y \geq f(x), \\
        f^{-1}(y) < x &\implies y < f(x).
    \end{align}
    \item For all $x \in \mathbb{R}$, $f^{-1}(f(x)) \geq x$.
    \item If $f$ is right continuous at $x$, then for all $y \in \mathbb{R}$
    \begin{align}
        f^{-1}(y) = x &\implies y \leq f(x), \\
        f(x) > y &\implies x > f^{-1}(y), \label{eq:prop_geq_repeated}\\
        f(f^{-1}(y)) \geq y.
    \end{align}
    \item $f$ is strictly increasing on $\mathbb{R}$ if, and only if, $f^{-1}$ is continuous on $\mathbb{R}$.
    \item $f$ is continuous on $\mathbb{R}$ if, and only if, $f^{-1}$ is strictly increasing on $\mathbb{R}$.
    \item $f$ is right-continuous if, and only if, $(f^{-1})^{-1} = f$.
    \item $f$ is right-continuous if, and only if, $\{x \in \mathbb{R} : f(x) \geq y\}$ is closed for all $y \in \mathbb{R}$.
\end{enumerate}
\end{proposition}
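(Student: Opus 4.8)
The plan is to reduce every assertion to a single elementary observation about the \emph{superlevel sets} of $f$ and then read each property off the definition $f^{-1}(y)=\inf\{x: f(x)>y\}$. Throughout, write $A_y:=\{x: f(x)>y\}$. Because $f$ is monotonically increasing, $A_y$ is an \emph{upper ray}: if $x\in A_y$ and $x'\ge x$, then $f(x')\ge f(x)>y$, so $x'\in A_y$. Hence $A_y$ is one of $\emptyset$, $(c,\infty)$, or $[c,\infty)$ with $c=\inf A_y=f^{-1}(y)$. This one structural fact drives the whole proof.

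First I would dispatch the order implications of item~2, since everything downstream uses them. The implication \eqref{eq:prop_geq}, $f(x)>y\Rightarrow x\ge f^{-1}(y)$, is immediate: $f(x)>y$ says $x\in A_y$, hence $x\ge\inf A_y$. The two implications with conclusion $x\le f^{-1}(y)$, including \eqref{eq:prop_leq}, follow by contraposition: if $x>f^{-1}(y)=\inf A_y$ there is $x''\in A_y$ with $x''<x$, whence $y<f(x'')\le f(x)$, contradicting $f(x)\le y$. The remaining two implications are the contrapositive of \eqref{eq:prop_geq} and a direct infimum argument. Item~3 is then the special case $y=f(x)$. For item~1, monotonicity of $f^{-1}$ comes from the nesting $A_{y_2}\subseteq A_{y_1}$ for $y_1<y_2$, and existence of left limits is automatic for a monotone function; right continuity is the only genuinely set-theoretic point: since $f(x)>y_0$ holds iff $f(x)>y$ for some $y>y_0$, one has $A_{y_0}=\bigcup_{y>y_0}A_y$, so $f^{-1}(y_0)=\inf A_{y_0}=\inf_{y>y_0}f^{-1}(y)=\lim_{y\to y_0^+}f^{-1}(y)$. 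Item~4 then uses right continuity of $f$ at the relevant point to decide whether the infimum $\inf A_y$ is \emph{attained}, i.e.\ whether $A_y$ is closed or open at its left endpoint; this is exactly what upgrades the weak inequalities of item~2 to the sharper statements \eqref{eq:prop_geq_repeated} and $f(f^{-1}(y))\ge y$.

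The substantive work is the four equivalences, items~5--8, all resting on one dictionary between the \emph{discontinuities} of $f$ and the \emph{flat pieces} of $f^{-1}$ (and vice versa): a jump of $f$ corresponds to an interval on which $f^{-1}$ is constant, while an interval on which $f$ is constant corresponds to a jump of $f^{-1}$. Granting this, item~5 follows because strict monotonicity of $f$ is the absence of flat pieces of $f$, equivalently the absence of jumps of $f^{-1}$, i.e.\ continuity of $f^{-1}$; item~6 is the mirror statement. Item~8 is the assertion that right continuity is precisely what closes the superlevel sets $\{x:f(x)\ge y\}$ at their left endpoints, which I would prove directly from the ray structure of $A_y$ together with $A_{y_0}=\bigcup_{y>y_0}A_y$. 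Item~7, the involution $(f^{-1})^{-1}=f$, I would obtain by invoking item~1: since $f^{-1}$ is \emph{always} right continuous, $(f^{-1})^{-1}$ is the right-continuous modification of $f$, and the two coincide exactly when $f$ is itself right continuous.

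I expect the main obstacle to be the equivalences~5--8, and in particular pinning down the involution~7 and the precise role of right continuity. The generalized inverse collapses all information about $f$ on intervals where $f$ is constant and interpolates across jumps, so $f$ cannot in general be recovered from $f^{-1}$; the content of item~7 is that the sole obstruction is a failure of right continuity, and making the bookkeeping of the left/right endpoints of the rays $A_y$ precise---to show $(f^{-1})^{-1}$ is exactly the right-continuous version of $f$---is the delicate step. These facts are classical, and for the detailed endpoint bookkeeping I would follow \cite{de2015study}.
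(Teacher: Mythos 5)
You should first know how the paper itself treats this statement: it does \emph{not} prove Proposition \ref{prop:pseudo_inverse_properties} at all. The proposition is quoted verbatim from \cite{de2015study}, and the only proof material in the source is a disabled (commented-out) fragment that establishes \eqref{eq:prop_geq} and \eqref{eq:prop_leq}, with strictness obtained under the extra hypothesis that $f$ is \emph{continuous}, via the identity $f(f^{-1}(y))=y$. So your attempt has to be judged on its own merits rather than against an in-paper argument. On those merits, your superlevel-ray framework is the standard and correct route: the ray structure of $A_y$ gives items 1--3 cleanly (in particular, the identity $A_{y_0}=\bigcup_{y>y_0}A_y$ is exactly what yields right continuity of $f^{-1}$, a point many sketches fumble), the jump/flat duality is the right engine for items 5, 6 and 8, and your remark that a generalized inverse is \emph{always} right continuous disposes of the ``only if'' direction of item 7 rigorously.

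There is, however, one genuine gap: your mechanism for item 4 cannot deliver \eqref{eq:prop_geq_repeated}. You claim right continuity at $x$ ``decides whether $\inf A_y$ is attained'' and that this upgrades $x\ge f^{-1}(y)$ to $x> f^{-1}(y)$; but attainment of the infimum works \emph{against} strictness, not for it. Take $f(t)=0$ for $t<0$ and $f(t)=1$ for $t\ge 0$: this $f$ is right continuous at $x=0$, and with $y=1/2$ one has $f(0)=1>y$ yet $f^{-1}(y)=\inf\{t: f(t)>y\}=0=x$, so the implication fails under the stated hypothesis. What actually produces strictness is \emph{left} continuity at $x$: then $f(x)>y$ forces $f(x-\epsilon)>y$ for small $\epsilon>0$, whence $f^{-1}(y)\le x-\epsilon<x$. (Right continuity at $f^{-1}(y)$ correctly gives the other two parts of item 4, namely $f^{-1}(y)=x\Rightarrow y\le f(x)$ and $f(f^{-1}(y))\ge y$, exactly as you argue.) A careful write-up should either strengthen the hypothesis of \eqref{eq:prop_geq_repeated} to left or two-sided continuity --- as the paper's disabled fragment implicitly does --- or flag the transcription as defective; note that where the paper actually invokes this implication, in the proof of Lemma \ref{prop:scalar_monotone_preliminaries}, the function $g$ is Lipschitz and hence continuous, so nothing downstream is endangered. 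A secondary shortfall: the forward direction of item 7 ($f$ right continuous $\Rightarrow (f^{-1})^{-1}=f$), which you correctly identify as the delicate endpoint bookkeeping, is asserted via the ``right-continuous modification'' picture but ultimately deferred to \cite{de2015study}; since that is the one substantive equivalence, your proposal is incomplete there --- though, in fairness, deferring to \cite{de2015study} is precisely what the paper does for the entire proposition.
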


\bibliographystyle{ieeetr}
\bibliography{main}

\end{document}